\date{}
\newlength{\defbaselineskip}
\newcommand{\setlinespacing}[1]%
           {\setlength{\baselineskip}{#1 \defbaselineskip}}
\newcommand{\N}{{\mathbb{N}}}
\newcommand{\actaqed}{\hfill $\actabox$}
{\medskip\noindent \textit{Proof of #1. }}%
{\actaqed \medskip}
\def\D{{\mathcal D}}
\def\cA{{\mathcal A}}
\def\cC{{\mathcal C}}
\def \Tr{\mathcal T}
\def \cK{\mathcal K}
\def \cV{\mathcal V}
\def \cU{\mathcal U}
\def \cF{\mathcal F}
\def \cS{\mathcal S}
\def \cX{\mathcal X}
\def \cL{\mathcal L}
\def \cR{\mathcal R}
\def\R{{\mathbb R}}
\def\Z{\mathbb Z}
\def \T{\mathbb T}
\def\bbC{\mathbb C}
\def \<{\langle}
\def\>{\rangle}
\def \ff{\varphi}
\def \sign{\operatorname{sign}}
\def\ba{\mathbf a}
\def\bj{\mathbf j}
\def\bh{\mathbf h}
\def\bx{\mathbf x}
\def\by{\mathbf y}
\def\bk{\mathbf k}
\def\bw{\mathbf w}
\def\bn{\mathbf n}
\def\bs{\mathbf s}
\def\bN{\mathbf N}
\def\De{{\Delta}}
\newtheorem{Theorem}{Theorem}[section]
\newtheorem{Lemma}{Lemma}[section]
\newtheorem{Definition}{Definition}[section]
\newtheorem{Proposition}{Proposition}[section]
\newtheorem{Remark}{Remark}[section]
\numberwithin{equation}{section}
\newcommand{\be}{\begin{equation}}
\newcommand{\ee}{\end{equation}}
\begin{document}

\title{On universal sampling representation}
\author{V.N. Temlyakov\thanks{University of South Carolina, Steklov Institute of Mathematics, Lomonosov Moscow State University, and Moscow Center for Fundamental and Applied Mathematics.  }}
\maketitle
\begin{abstract}
{For the multivariate trigonometric polynomials we study convolution with the corresponding  the de la Vall\'ee Poussin kernel from the point of view of discretization. In other words, we replace the normalized Lebesgue measure by a discrete measure in such a way, which preserves the convolution properties and provides sampling discretization of integral norms. We prove that in the two-variate case the Fibonacci point sets provide an ideal (in the sense of order) solution. We also show that the Korobov point sets provide a suboptimal (up to logarithmic factors) solution for an arbitrary number of variables. }
\end{abstract}

\section{Introduction} 
\label{I} 

{\bf General remarks.} We study universal discretization of convolution of trigonometric polynomials from a given collection of finite-dimensional subspaces.   Let $Q$ be a finite subset of $\Z^d$. Denote
$$
\Tr(Q):= \{f: f=\sum_{\bk\in Q}c_\bk e^{i(\bk,\bx)}\},\quad \D_Q(\bx) := \sum_{\bk\in Q} e^{i(\bk,\bx)}.
$$
The convolution of two polynomials $f,g \in \Tr(Q)$ is 
$$
(f*g)(\bx) := (2\pi)^{-d} \int_{\T^d} f(\by)g(\bx-\by)d\by,
$$
where $\T^d := [0,2\pi)^d$. We are interested in replacing integration with respect to the Lebesgue measure by a finite sum. In other words, we want to replace the normalized Lebesgue measure on $\T^d$ by a discrete measure $\mu_m$ with support on a finite set of points $\{\xi^\nu\}_{\nu=1}^m$ such that $\mu_m(\xi^\nu)=1/m$, $\nu=1,\dots,m$. We recall some of the well known results in this direction. Denote
$$
x^l := 2\pi l/(2N+1), \qquad l = 0, 1, ..., 2N.
$$
It is well known (see, for instance, \cite{VTbookMA}, p.7) that for any $u,v \in \Tr([-N,N])$ we have
$$
\<u, v\> := (2\pi)^{-1}\int_{\T} u(x) \overline{v(x)} dx =
(2N+1)^{-1}\sum_{l=0}^{2N} u(x^l)\overline{ v(x^l)}.
$$
This implies that in the case $Q=[-N,N]$ convolution can be discretized with $\xi^\nu = x^{\nu-1}$, $\nu=1,\dots, 2N+1$. Similarly, in the multivariate case of $Q=\Pi(\bN):=[-N_1,N_1]\times \cdots \times [-N_d,N_d]$, $N_j \in \N_0:=\N\cup \{0\}$, $j=1,\dots,d$, $\bN=(N_1,\dots,N_d)$ 
we denote
\begin{align*}
P(\mathbf N) := \bigl\{\mathbf n = (n_1 ,\dots,n_d):\, n_j\in \N_0 ,\quad
 0\le n_j\le 2N_j  ,\, j = 1,\dots,d \bigr\},
\end{align*}
and set
$$
\bx^{\mathbf n}:=\left(\frac{2\pi n_1}{2N_1+1},\dots,\frac{2\pi n_d}
{2N_d+1}\right),\qquad \mathbf n\in P(\mathbf N) .
$$
Then we can discretize the multivariate convolution of functions from $\Tr(\Pi(\bN))$ using 
the set of points $\{\bx^\bn\}_{\bn\in P(\bN)}$. Clearly, this set of points heavily depends on
$\bN$. We would like to build a universal discretization of convolution. We formulate it in a special case. For $\bj\in\N^d$
define
$$
R(\bj) := \{\bk \in \Z^d :   |k_i| < j_i, \quad i=1,\dots,d\}.
$$
Consider the collection $\cC'(N,d):= \{\Tr(R(\bj)), j_1\cdots j_d\le N\}$. 
We would like to find a set of points $\{\xi^\nu\}_{\nu=1}^m$ with as small $m$ as possible 
with the following properties. 

{\bf Property A.} For any $\bj$, satisfying $|j_1|\cdots|j_d|\le N$, and any $f,g \in 
\Tr(R(\bj))$ we have
$$
(f*g)(\bx) = \frac{1}{m} \sum_{\nu=1}^m f(\xi^\nu)g(\bx-\xi^\nu).
$$
In particular, for $g(\bx) = \D_{R(\bj)}(\bx)$ we have
$$
f(\bx) = \frac{1}{m} \sum_{\nu=1}^m f(\xi^\nu)\D_{R(\bj)}(\bx-\xi^\nu).
$$

{\bf Property B.} Let $1\le p < \infty$. For any $\bj$, satisfying $j_1\cdots j_d\le N$, and any $f \in \Tr(R(\bj))$ we have
$$
\|f\|_p^p \le C(p,d) \frac{1}{m} \sum_{\nu=1}^m |f(\xi^\nu)|^p.
$$

In other words we want a universal sampling representation with additional good properties (Property B). 

In Section \ref{A} we prove that in the case $d=2$ the Fibonacci point set $\cF_n$ (with an appropriate $n$) provides Properties 
A and B with the best possible (in the sense of order) bound on $m$: $m\le CN$. 

In Section \ref{B} we study properties of the following operator 
$$
V_{Q_r,b_n}(\ba)(\bx):= \frac{1}{b_n} \sum_{\nu=1}^{b_n} a_\nu \cV_{Q_r}(\bx-\by^\nu),
$$
where $Q_r$ is the step hyperbolic cross and $\cV_{Q_r}$ is the de la Vall\'ee Poussin kernel
for it (see Section \ref{B}). 

In Section \ref{C} we extend results of Section \ref{A} to the case $d\ge 3$. Instead of the Fibonacci point sets we consider the Korobov point sets. We obtain results somewhat similar to those from Section \ref{A} but not as sharp as results on the Fibonacci point sets. We show that the Korobov point sets provide suboptimal (up to logarithmic factors) results for an arbitrary $d$.

In Section \ref{D} we present a discussion of known results on universal sampling discretization and their relations with our new results. Also, we formulate some open problems. 

{\bf Main results.} We now formulate the main results of the paper. Let $\{b_n\}_{n=0}^{\infty}$, $b_0=b_1 =1$, $b_n = b_{n-1}+b_{n-2}$,
$n\ge 2$, -- be the Fibonacci numbers.

Denote 
$$
\mathbf y^{\mu}:=\bigl(2\pi\mu/b_n, 2\pi\{\mu
b_{n-1}/b_n\}\bigr), \quad \mu = 1,\dots,b_n,\quad \cF_n:=\{\by^\mu\}_{\mu=1}^{b_n}.
$$
In this definition $\{a\}$ is the fractional part of the number $a$. The cardinality of the set $\cF_n$ is equal to $b_n$.  Let $\ell^m_{p,m}$ be $\R^m$ equipped with the norm
$$
\|\bx\|_{p,m} := \left(\frac{1}{m}\sum_{i=1}^m |x_i|^p\right)^{1/p},\quad 1\le p<\infty; \quad \|\bx\|_\infty := \max_{i}|x_i|.
$$
In Section \ref{A} we prove the following result.

\begin{Theorem}\label{IT1} Let $\gamma$ be from Lemma \ref{AL1} and let $1\le p\le\infty$. The Fibonacci point set $\cF_n$ provides the following two properties for the collection $\cC'(N,2)$ with $N=[\gamma b_n/4]$.  

{\bf (I).} For any $\bj$, satisfying $4j_1j_2\le \gamma b_n$, and any $f,g \in 
\Tr(R(\bj))$ we have
$$
(f*g)(\bx) = \frac{1}{b_n} \sum_{\nu=1}^{b_n} f(\by^\nu)g(\bx-\by^\nu).
$$

{\bf (II).} Let $\cV_\bj(\bx)$ be the de la Vall\'ee Poussin kernels for $R(\bj)$ (see Section \ref{A} for the definition).
Then for any $\bj$ satisfying $4j_1j_2\le \gamma b_n$ and any $f\in \Tr(R(\bj))$ we have
$$
\left\|\frac{1}{b_n}\sum_{\nu=1}^{b_n}  a_\nu\cV_\bj(\bx-\by^\nu)\right\|_p \le 9 \|\ba\|_{p,b_n},\quad \ba = (a_1,\dots,a_{b_n}).
$$
\end{Theorem}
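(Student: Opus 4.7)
My approach to both (I) and (II) reduces everything to the characteristic property of the Fibonacci points: for $\bk\in\Z^2$,
$$\frac{1}{b_n}\sum_{\nu=1}^{b_n}e^{i(\bk,\by^\nu)}=\begin{cases}1&\text{if }k_1+b_{n-1}k_2\equiv 0\pmod{b_n},\\ 0&\text{otherwise.}\end{cases}$$
Lemma \ref{AL1} supplies the quantitative ingredient: every nonzero $\bk$ satisfying the Fibonacci congruence also satisfies $\max(|k_1|,1)\max(|k_2|,1)\ge\gamma b_n$.

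For (I), I would expand $f,g\in\Tr(R(\bj))$ in Fourier series. The left-hand side is $\sum_{\bk\in R(\bj)}\hat f(\bk)\hat g(\bk)e^{i(\bk,\bx)}$, while rearranging the right-hand side gives
$$\sum_{\bk,\bl\in R(\bj)}\hat f(\bk)\hat g(\bl)e^{i(\bl,\bx)}\cdot\frac{1}{b_n}\sum_{\nu=1}^{b_n}e^{i(\bk-\bl,\by^\nu)}.$$
The inner sum vanishes unless $\bk-\bl$ satisfies the Fibonacci congruence; but $\bk,\bl\in R(\bj)$ forces $|k_i-l_i|\le 2j_i-2$, and combining this with Lemma \ref{AL1} and the hypothesis $4j_1j_2\le\gamma b_n$ forbids every nonzero difference. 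Only the diagonal $\bk=\bl$ survives, and the two expressions coincide.

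For (II), the plan is to treat the endpoints $p=1$ and $p=\infty$ and interpolate. Writing the univariate de la Vall\'ee Poussin kernel as $V_j=2K_{2j-1}-K_{j-1}$ in terms of Fej\'er kernels gives $\|V_j\|_1\le 3$ and hence $\|\cV_\bj\|_1\le 9$, so the triangle inequality yields $\|T\|_1\le 9\|\ba\|_{1,b_n}$ at once. For $p=\infty$, I replace $\cV_\bj$ by the non-negative trigonometric-polynomial majorant
$$\Phi(\bx):=\bigl(2K_{2j_1-1}(x_1)+K_{j_1-1}(x_1)\bigr)\bigl(2K_{2j_2-1}(x_2)+K_{j_2-1}(x_2)\bigr),$$
which satisfies $|\cV_\bj(\bx)|\le\Phi(\bx)$ pointwise. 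Since $\Phi$ has Fourier support in $\{|k_1|\le 2j_1-1,\,|k_2|\le 2j_2-1\}$ and $(2j_1-1)(2j_2-1)<4j_1j_2\le\gamma b_n$, the exponential-sum argument of (I) applied to $\Phi$ itself yields $\frac{1}{b_n}\sum_\nu\Phi(\bx-\by^\nu)=\hat\Phi(0)=9$ identically in $\bx$. Consequently
$$|T(\bx)|\le\|\ba\|_\infty\cdot\frac{1}{b_n}\sum_\nu|\cV_\bj(\bx-\by^\nu)|\le\|\ba\|_\infty\cdot\frac{1}{b_n}\sum_\nu\Phi(\bx-\by^\nu)=9\|\ba\|_\infty.$$
Riesz--Thorin interpolation between these two endpoints, both carrying operator norm $9$ from $\ell^{b_n}_{p,b_n}$ into $L_p(\T^2)$, then extends the bound to every $1\le p\le\infty$.

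The main technical obstacle is the $p=\infty$ case: one cannot apply (I) directly to $|\cV_\bj|$, because taking absolute values destroys the trigonometric-polynomial structure. The resolving device is the Fej\'er-product majorant $\Phi$, which is simultaneously non-negative (so that it controls $|\cV_\bj|$ pointwise) and a trigonometric polynomial of low enough degree, thanks to the factor-of-$4$ slack built into the hypothesis $4j_1j_2\le\gamma b_n$, for the Fibonacci quadrature on it to be exact with mean value equal to $9$.
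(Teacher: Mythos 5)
Your proof is correct and follows essentially the same route as the paper: part (I) is the exactness of the Fibonacci cubature on $\Tr(\Gamma(\gamma b_n))$ (Lemma \ref{AL1}) applied to $f(\cdot)g(\bx-\cdot)$ (you merely verify it coefficientwise via the exponential sums instead of invoking the cubature exactness on the product polynomial directly), and part (II) uses the identical endpoint scheme --- the bound $\|\cV_\bj\|_1\le 9$ for $p=1$ and, for $p=\infty$, the nonnegative Fej\'er-product majorant of $|\cV_\bj|$ whose Fibonacci average is evaluated exactly by Lemma \ref{AL1} --- followed by Riesz--Thorin interpolation.
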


In Section \ref{C} we extend Theorem \ref{IT1} to the case of $d\ge 3$ by considering the Korobov point sets instead of the Fibonacci point sets. Results of Section \ref{C} are not as 
sharp as results of Section \ref{A} -- in the bounds on the number of points $m$ we have an extra logarithmic factor.

Let $\cV_{Q_r}(\bx) $ be the hyperbolic cross de la Vall\'ee Poussin kernel (see the definition in Section \ref{B}).
We are interested in studying the operator $V_{Q_r,b_n}: \ell^{b_n}_{p,b_n} \to L_p(\T^2)$ defined as
$$
 V_{Q_r,b_n}(\ba)(\bx):= \frac{1}{b_n} \sum_{\nu=1}^{b_n} a_\nu \cV_{Q_r}(\bx-\by^\nu),\quad \ba=(a_1,\dots,a_{b_n}).
$$
In Section \ref{B} we prove the following result.

\begin{Theorem}\label{IT2} Let $r\in \N$ be such that $2^r \le \gamma b_n$. Then we have for \newline $1\le p\le\infty$
\be\label{I1}
\|V_{Q_r,b_n}\|_{\ell^{b_n}_{p,b_n} \to L_p} \le C r^{\theta(p)},\quad \theta(p):= \max(1/p,1-1/p), 
\ee
with an absolute constant $C$.
\end{Theorem}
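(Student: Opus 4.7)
The plan is to write $\cV_{Q_r}$ as a sum of $O(r)$ ``strip'' de la Vall\'ee Poussin kernels, each of which falls under Theorem \ref{IT1}(II), and then to interpolate between the three natural endpoints $p\in\{1,2,\infty\}$. I would decompose the step hyperbolic cross as a disjoint union of strips according to the scale in the second variable,
$$
Q_r = \bigcup_{s=0}^{r} S_s, \qquad S_s := \bigcup_{s_1=0}^{r-s} \rho(s_1,s),
$$
so that each $S_s$ is contained in a rectangle of dimensions roughly $2^{r-s}\times 2^s$. Following the standard construction of the hyperbolic cross de la Vall\'ee Poussin kernel in Section \ref{B}, one can choose a rectangular Vall\'ee Poussin polynomial $\cU_s$ equal to $1$ on $S_s$ and supported in a slight enlargement, so that $\cV_{Q_r}=\sum_{s=0}^{r}\cU_s$; each $\cU_s$ has spectrum in a rectangle $R(\bj^{(s)})$ with $4 j_1^{(s)} j_2^{(s)} \le C\cdot 2^r \le C\gamma b_n$ (after absorbing an absolute constant into $\gamma$); and the spectra $\supp \widehat{\cU_s}$ have bounded overlap in $\Z^2$, since they lie in essentially disjoint dyadic annuli in the $k_2$-direction. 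Theorem \ref{IT1}(II) then applies to each strip and gives $\|V_{\cU_s,b_n}(\ba)\|_p \le C\|\ba\|_{p,b_n}$ for $1\le p\le\infty$.

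I would then treat the endpoints $p=1,2,\infty$ separately. Since $V_{Q_r,b_n}(\ba)=\sum_{s=0}^{r} V_{\cU_s,b_n}(\ba)$, the triangle inequality at $p=1$ and $p=\infty$ gives
$$
\|V_{Q_r,b_n}(\ba)\|_p \le \sum_{s=0}^{r} \|V_{\cU_s,b_n}(\ba)\|_p \le Cr\|\ba\|_{p,b_n}.
$$
At $p=2$, the bounded-overlap property of the spectra combined with Parseval yields
$$
\|V_{Q_r,b_n}(\ba)\|_2^2 \le C\sum_{s=0}^{r} \|V_{\cU_s,b_n}(\ba)\|_2^2 \le Cr\|\ba\|_{2,b_n}^2,
$$
so that $\|V_{Q_r,b_n}\|_{\ell^{b_n}_{2,b_n}\to L_2}\le C r^{1/2}$.

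Finally, I would invoke the Riesz--Thorin interpolation theorem. Interpolating between the bounds at $p=1$ and $p=2$ produces $C r^{1/p}$ for $p\in[1,2]$; interpolating between $p=2$ and $p=\infty$ produces $C r^{1-1/p}$ for $p\in[2,\infty]$. Together these are exactly the claimed bound $C r^{\theta(p)}$. The main obstacle is arranging the decomposition so that the $p=2$ step is sharp: a naive splitting into overlapping rectangular Vall\'ee Poussin pieces would cost an extra factor of $r^{1/2}$ at $p=2$ (giving only the non-sharp bound $Cr$ in every $L_p$), so the strips must be chosen with essentially disjoint, bounded-overlap spectra. This is a Littlewood--Paley-type construction that relies on the specific form of $\cV_{Q_r}$ introduced in Section \ref{B}; once it is set up, the remaining estimates are routine.
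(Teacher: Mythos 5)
Your proof is correct, and at the decisive step it takes a genuinely different route from the paper. At the endpoints $p=1$ and $p=\infty$ you and the paper do the same thing: the decomposition you describe is exactly the paper's identity (\ref{B2'}), $\cV_{Q_r}(\bx)=\sum_{s=0}^{r}\cA_{s}(x_1)\cV_{2^{r-s-1}}(x_2)$, and the triangle inequality over the $r+1$ strips gives (\ref{B4}) and Lemma \ref{BL2}. (Two small points of hygiene: each strip kernel is $\cA_{s}(x_1)\cV_{2^{r-s-1}}(x_2)$, a difference of at most two products $\cV_{j_1}(x_1)\cV_{j_2}(x_2)$ rather than a single one, so Lemma \ref{AL2} applies after one more triangle inequality with only a constant loss --- this is precisely the ``analog of Lemma \ref{AL2}'' invoked at (\ref{B23}); and since the relevant products of Fej\'er kernels have spectrum in $\Gamma(2^r)$, the hypothesis $2^r\le\gamma b_n$ suffices as stated, with no constant absorbed into $\gamma$.) The difference is at $p=2$. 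You gain the factor $r^{1/2}$ from almost-orthogonality in the frequency index: the strips have bounded spectral overlap, so Parseval gives $\|\sum_s f_s\|_2^2\le C\sum_s\|f_s\|_2^2$, and the uniform single-strip bound finishes the estimate. The paper instead gains it from almost-orthogonality in the sample index: it normalizes the translates $u_\nu=\cV_{Q_r}(\cdot-\by^\nu)/\|\cV_{Q_r}\|_2$, applies the Gram-matrix Lemma \ref{BL3}, and verifies the row-sum condition (\ref{B10}) via Lemma \ref{BL4}, which controls $b_n^{-1}\sum_\nu|\cV_{Q_r}^*(\bx-\by^\nu)|$ for the autocorrelation $\cV_{Q_r}^*=\cV_{Q_r}\ast\cV_{Q_r}$. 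Your route is more elementary --- no autocorrelation kernel and no interpolation inside the Gram-matrix lemma are needed --- and it is in effect the $p=2$ case of the Littlewood--Paley argument the paper itself deploys for Theorem \ref{BT2}; the paper's route isolates a reusable statement (Lemmas \ref{BL3} and \ref{BL4}) about the geometry of the translated kernels over the Fibonacci points, which is of independent interest. The final double application of Riesz--Thorin, for the pairs $(1,2)$ and $(2,\infty)$, is identical in both arguments and yields $r^{\max(1/p,1-1/p)}$ as claimed.
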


Note that Theorem \ref{IT2} is sharp in the cases $1\le p\le 2$ and $p=\infty$. Indeed, let us take $\ba = (1,0,\dots,0)$. Then for $1\le p\le 2$ Theorem \ref{IT2} gives
 \be\label{I2}
\|b_n^{-1}\cV_{Q_r}(\bx-\by^1)\|_p \le Cb_n^{-1/p}r^{1/p}.
\ee
Let $r$ be such that $2^r \asymp b_n$ ($2^r$ is of order $b_n$). It is known (see, for instance, \cite{VTbookMA}, p.140) that
 \be\label{I3}
\|\cV_{Q_r}\|_p \ge c(p) 2^{(1-1/p)r}r^{1/p} \asymp b_n^{1-1/p} (\log b_n)^{1/p}.
\ee
Comparing the above two inequalities, we see that the upper bound in (\ref{I2}) provided by 
Theorem \ref{IT2} coincides (in the sense of order) with the lower bound in (\ref{I3}). 
The lower bound in the case $p=\infty$ is straight forward.

 \section{The Fibonacci point sets}
\label{A}  
 
For the continuous functions of two
variables, which are $2\pi$-periodic in each variable, we define
cubature formulas
$$
\Phi_n(f) :=b_n^{-1}\sum_{\mu=1}^{b_n}f\bigl(2\pi\mu/b_n,
2\pi\{\mu b_{n-1} /b_n \}\bigr),
$$
 called the {\it Fibonacci cubature formulas}.  Denote
 $$
\Phi(\mathbf k) := b_n^{-1}\sum_{\mu=1}^{b_n}e^{i(\mathbf k,\mathbf y^{\mu})}.
$$
Then
\be\label{A1}
\Phi_n (f) =\sum_{\mathbf k}\hat f(\mathbf k)\Phi(\mathbf k),\quad \hat f(\bk) := (2\pi)^{-2}\int_{\T^2} f(\bx)e^{-i(\bk,\bx)}d\bx,
\ee
where for the sake of simplicity we may assume that $f$ is a
trigonometric polynomial. It is clear that (\ref{A1}) holds for $f$ with absolutely convergent Fourier series. 

It is easy to see that the following relation holds
$$
\Phi(\mathbf k)=
\begin{cases}
1&\quad\text{ for }\quad \mathbf k\in L(n)\\
0&\quad\text{ for }\quad \mathbf k\notin L(n),
\end{cases}
$$
where
$$
L(n) :=\bigl\{ \mathbf k = (k_1,k_2):k_1 + b_{n-1} k_2\equiv 0
\qquad \pmod {b_n}\bigr\}.
$$
Denote $L(n)':= L(n)\backslash\{\mathbf 0\}$. For $N\in\N$ define the {\it hyperbolic cross} in dimension $2$ as follows:
$$
\Gamma(N):=\Gamma(N,2):= \left\{\bk\in\Z^2: \prod_{j=1}^2 \max(|k_j|,1) \le N\right\}.
$$
The following lemma is well known (see, for instance, \cite{VTbookMA}, p.274).

\begin{Lemma}\label{AL1} There exists an absolute constant $\gamma > 0$
such that for any $n > 2$ for the $2$-dimensional hyperbolic cross we have
$$
\Gamma(\gamma b_n)\cap \bigl(L(n)\backslash\{\mathbf 0\}\bigr) =
\varnothing
$$
and, therefore, for any $f\in \Tr(\Gamma(N))$ with $N\le \gamma b_n$ we have
$$
\Phi_n(f) = (2\pi)^{-2}\int_{\T^2} f(\bx)d\bx.
$$
\end{Lemma}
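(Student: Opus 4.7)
\medskip
\noindent\textbf{Proof plan.} The second assertion is an immediate consequence of the first. Once $\Gamma(\gamma b_n)\cap L(n)'=\varnothing$ is in hand, the identity (\ref{A1}) together with the explicit formula for $\Phi(\bk)$ collapses $\Phi_n(f)=\sum_\bk \hat f(\bk)\Phi(\bk)$, for $f\in\Tr(\Gamma(N))$ with $N\le\gamma b_n$, to the single surviving term $\hat f(\mathbf 0)=(2\pi)^{-2}\int_{\T^2}f(\bx)\,d\bx$. So the real task is to produce an absolute $\gamma>0$ such that every nonzero $\bk=(k_1,k_2)\in L(n)$ obeys $\max(|k_1|,1)\max(|k_2|,1)\ge\gamma b_n$.

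The axis cases are easy. If $k_2=0$ then $k_1\equiv 0\pmod{b_n}$ and $\bk\ne\mathbf 0$ force $|k_1|\ge b_n$; if $k_1=0$ then $b_{n-1}k_2\equiv 0\pmod{b_n}$, and coprimality $\gcd(b_{n-1},b_n)=1$ (immediate from Cassini's identity $b_{n-1}^2-b_nb_{n-2}=(-1)^n$) gives $|k_2|\ge b_n$. In both subcases the product is $\ge b_n$.

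The substantive case is $k_1,k_2\ne 0$. Writing $k_1=b_nm-b_{n-1}k_2$ for some $m\in\Z$ and using Binet's formula, a direct computation gives $\varphi b_{n-1}-b_n=-\psi^n$, where $\varphi=(1+\sqrt 5)/2$ and $\psi=-1/\varphi$; thus $b_{n-1}=b_n/\varphi-\psi^n/\varphi$. Substituting,
$$
k_1 \;=\; b_n\Bigl(m-\tfrac{k_2}{\varphi}\Bigr)+\tfrac{k_2\psi^n}{\varphi},\qquad |k_1|\;\ge\; b_n\Bigl|m-\tfrac{k_2}{\varphi}\Bigr|-\tfrac{|k_2|\,|\psi|^n}{\varphi}.
$$
Because $1/\varphi$ has continued fraction $[0;1,1,1,\dots]$ (bounded partial quotients), it is badly approximable: there is an absolute constant $C_0>0$ with $|m-k_2/\varphi|\ge C_0/|k_2|$ for every $k_2\in\Z\setminus\{0\}$ and every $m\in\Z$. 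Multiplying the previous display through by $|k_2|$,
$$
|k_1|\,|k_2|\;\ge\; C_0\,b_n\;-\;k_2^2\,|\psi|^n/\varphi.
$$

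Since $b_n\asymp\varphi^n$ and $|\psi|^n=\varphi^{-n}$, for $\bk\in\Gamma(\gamma b_n)$ (so $|k_2|\le\gamma b_n$) the correction term is bounded by $A\gamma^2 b_n$ with $A$ absolute. Choosing $\gamma$ small enough that $A\gamma^2<C_0/2$ and $\gamma<C_0/2$ (and, if necessary, further so small that $\gamma b_n<1$ for the finitely many small $n$ that Binet's tail does not immediately control) yields $|k_1|\,|k_2|\ge C_0b_n/2>\gamma b_n$, contradicting $\bk\in\Gamma(\gamma b_n)$. The only real obstacle is securing the diophantine inequality $|m-k_2/\varphi|\ge C_0/|k_2|$ with an \emph{absolute} $C_0$; I would extract this from the standard convergent estimate $|F_k\varphi-F_{k+1}|=|\psi|^k$ for the golden-ratio convergents together with the fact that non-convergents do strictly worse, which uniformizes the bound across $k_2$.
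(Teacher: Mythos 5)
Your argument is correct. Note first that the paper itself offers no proof of Lemma \ref{AL1}: it is quoted as a known result with a reference to \cite{VTbookMA}, p.~274, so you are supplying a proof rather than reproducing one. Your route is a mild variant of the classical one. The textbook argument works with the rational number $b_{n-1}/b_n$ directly: writing $|k_1|=b_n\,|q-k_2b_{n-1}/b_n|\ge b_n\|k_2 b_{n-1}/b_n\|$ (distance to the nearest integer) and using that $b_{n-1}/b_n=[0;1,1,\dots,1]$ has all partial quotients equal to $1$, one gets $\|k_2 b_{n-1}/b_n\|\ge c/|k_2|$ for $0<|k_2|<b_n$ from the standard continued-fraction lower bound $\|q\alpha\|\ge (q_{j+1}+q_j)^{-1}$ for $q<q_{j+1}$, with no error term at all. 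You instead replace $b_{n-1}/b_n$ by the irrational $1/\varphi$ via Binet's formula, which buys you a single uniform badly-approximable constant $C_0$ at the price of the correction term $k_2\psi^n/\varphi$; your bookkeeping for that term is right, since $|k_2|\le\gamma b_n$ and $b_n|\psi|^n=b_n\varphi^{-n}\le\varphi$ give $k_2^2|\psi|^n/\varphi\le\gamma^2 b_n$, after which $\gamma^2<C_0/2$ and $\gamma<C_0/2$ close the contradiction. The axis cases and the deduction of the cubature identity from $\Gamma(\gamma b_n)\cap L(n)'=\varnothing$ via \eqref{A1} are handled correctly, and the uniform constant $C_0$ does follow as you indicate from $|F_k\varphi-F_{k+1}|=|\psi|^k$ together with the best-approximation property of convergents (one can take, e.g., $C_0=1/3$). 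So the proof is complete; the only stylistic remark is that the direct continued-fraction argument for $b_{n-1}/b_n$ avoids the error term entirely.
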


{\bf Proof of Theorem \ref{IT1}.} 
  We begin with a proof of part (I) of Theorem \ref{IT1}. It is a direct corollary of Lemma \ref{AL1}. For $f,g\in\Tr(\bj)$ we have for each $\bx\in\T^2$ that $f(\by)g(\bx-\by)\in \Tr(2\bj)$ and, taking into account our assumption $4j_1j_2\le \gamma b_n$, by Lemma \ref{AL1} we obtain
 $$
(f*g)(\bx) = (2\pi)^{-2}\int_{\T^2} f(\by)g(\bx-\by)d\by = \Phi_n(f(\cdot)g(\bx-\cdot) )
$$
$$
= \frac{1}{b_n}\sum_{\nu=1}^m  f(\by^\nu)g(\bx-\by^\nu).
$$

We now proceed to the proof of  part (II) of Theorem \ref{IT1}. 
   We need some classical trigonometric polynomials for our further argument (see \cite{Z} and \cite{VTbookMA}). We begin with the univariate case. 
 The Dirichlet kernel of order $j$:
$$
\mathcal D_j (x):= \sum_{|k|\le j}e^{ikx} = e^{-ijx} (e^{i(2j+1)x} - 1)
(e^{ix} - 1)^{-1} 
$$
$$
=\bigl(\sin (j + 1/2)x\bigr)\bigm/\sin (x/2)
$$
   is an even trigonometric polynomial.  The Fej\'er kernel of order $j - 1$:
$$
\mathcal K_{j} (x) := j^{-1}\sum_{k=0}^{j-1} \mathcal D_k (x) =
\sum_{|k|\le n} \bigl(1 - |k|/j\bigr) e^{ikx} 
$$
$$
=\bigl(\sin (jx/2)\bigr)^2\bigm /\bigl(j (\sin (x/2)\bigr)^2\bigr).
$$
The Fej\'er kernel is an even nonnegative trigonometric
polynomial in $\Tr(j-1)$.  It satisfies the obvious relations
\be\label{FKm}
\| \mathcal K_{j} \|_1 = 1, \qquad \| \mathcal K_{j} \|_{\infty} = j.
\ee
The de la Vall\'ee Poussin kernel
\be\label{A2}
\mathcal V_{j} (x) := j^{-1}\sum_{l=j}^{2j-1} \mathcal D_l (x)= 2\cK_{2j}(x)-\cK_j(x) 
\ee
is an even trigonometric
polynomial of order $2j - 1$.

In the two-variate case  define the Fej\'er and de la Vall\'ee Poussin kernels as follows:
$$
\cK_\bj(\bx):=  \cK_{j_1}(x_1) \cK_{j_2}(x_2), \qquad \mathcal V_{\mathbf j} (\bx) := \cV_{j_1}(x_1) \cV_{j_2}(x_2)    ,\qquad
\mathbf j = (j_1,j_2) .
$$

The statement of the part (II) of Theorem \ref{IT1} follows from Lemma \ref{AL2}.
\begin{Lemma}\label{AL2} Let  $1\le p\le \infty$. Then for any $\bj$ satisfying $4j_1j_2\le \gamma b_n$ and any $f\in \Tr(R(\bj))$ we have
$$
\left\|\frac{1}{b_n}\sum_{\nu=1}^m  a_\nu\cV_\bj(\bx-\by^\nu)\right\|_p \le 9 \|\ba\|_{p,b_n}.
$$
\end{Lemma}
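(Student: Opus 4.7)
The strategy is to prove the two endpoint bounds $p=1$ and $p=\infty$ directly and then invoke Riesz--Thorin interpolation. Consider the linear operator $T\colon \ell^{b_n}_{p,b_n}\to L_p(\T^2)$ given by
\[
T\ba(\bx):=\frac{1}{b_n}\sum_{\nu=1}^{b_n} a_\nu\cV_\bj(\bx-\by^\nu).
\]
Both the source (normalized counting measure on $\{1,\dots,b_n\}$) and the target (normalized Lebesgue measure on $\T^2$) are $L_p$ spaces over probability measures, so Riesz--Thorin reduces the task to bounding the operator norms at $p=1$ and $p=\infty$, each by $9$.

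The $p=1$ endpoint is immediate from the triangle inequality and the translation invariance of $\|\cdot\|_1$: $\|T\ba\|_1\le \|\cV_\bj\|_1\cdot\|\ba\|_{1,b_n}$. Writing $\cV_j=2\cK_{2j}-\cK_j$ and using $\|\cK_j\|_1=1$ from (\ref{FKm}), we obtain $\|\cV_j\|_1\le 3$ and hence $\|\cV_\bj\|_1=\|\cV_{j_1}\|_1\|\cV_{j_2}\|_1\le 9$.

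The $p=\infty$ endpoint is the main step, since $|\cV_\bj|$ is not itself a trigonometric polynomial and Lemma~\ref{AL1} cannot be applied directly. I would circumvent this via the pointwise majorization
\[
|\cV_\bj(\bz)|\le F_\bj(\bz):=\bigl(2\cK_{2j_1}(z_1)+\cK_{j_1}(z_1)\bigr)\bigl(2\cK_{2j_2}(z_2)+\cK_{j_2}(z_2)\bigr),
\]
valid because $|2\cK_{2j}-\cK_j|\le 2\cK_{2j}+\cK_j$ and Fej\'er kernels are nonnegative. The nonnegative trigonometric polynomial $F_\bj$ has frequency support in the rectangle $[-(2j_1-1),2j_1-1]\times[-(2j_2-1),2j_2-1]$, and the decisive observation is that this rectangle is contained in the hyperbolic cross $\Gamma(\gamma b_n)$ because $(2j_1-1)(2j_2-1)<4j_1j_2\le \gamma b_n$. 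For each fixed $\bx$, the translate $F_\bj(\bx-\cdot)$ therefore lies in $\Tr(\Gamma(\gamma b_n))$, and Lemma~\ref{AL1} yields
\[
\frac{1}{b_n}\sum_{\nu=1}^{b_n} F_\bj(\bx-\by^\nu)=(2\pi)^{-2}\int_{\T^2} F_\bj(\by)\,d\by = \hat F_\bj(\mathbf 0)= 3\cdot 3=9.
\]
Hence $|T\ba(\bx)|\le 9\|\ba\|_\infty$ for every $\bx$, which gives the $\infty$-endpoint. The crux of the argument is spotting this polynomial majorant and using the fact that membership in $\Gamma(\gamma b_n)$ is controlled by the product $(2j_1-1)(2j_2-1)$, so that the hyperbolic-cross exactness of Lemma~\ref{AL1} still applies; the rest is routine.
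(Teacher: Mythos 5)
Your proof is correct and follows essentially the same route as the paper: the $p=1$ bound via $\|\cV_{j}\|_1\le 3$, the $p=\infty$ bound via the nonnegative Fej\'er-kernel majorant $(2\cK_{2j_1}+\cK_{j_1})(2\cK_{2j_2}+\cK_{j_2})$ combined with Lemma~\ref{AL1}, and Riesz--Thorin interpolation in between. You have in fact spelled out the frequency-support verification and the computation $\hat F_\bj(\mathbf 0)=9$ more explicitly than the paper does.
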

\begin{proof} Define the operator $V_\bj : \ell^{b_n}_{p,b_n} \to L_p$ as follows
$$
V_\bj(\ba) :=\frac{1}{b_n}\sum_{\nu=1}^{b_n}  a_\nu\cV_\bj(\bx-\by^\nu),\quad \ba=(a_1,\dots,a_{b_n}).
$$
We treat two extreme cases $p=1$ and $p=\infty$ and then use the classical Riesz-Thorin interpolation theorem. 

{\bf Case $p=1$.} Using the well known fact, which follows directly from (\ref{A2}) and (\ref{FKm}), that for the univariate de la Vall\'ee Poussin kernels we have the bound $\|\cV_j\|_1 \le 3$, we obtain
$$
\left\|\frac{1}{b_n}\sum_{\nu=1}^m  a_\nu\cV_\bj(\bx-\by^\nu)\right\|_1 \le 9\|\ba\|_{1,b_n},
$$
which means that 
$$
\|V_\bj\|_{\ell^{b_n}_{1,b_n} \to L_1} \le 9.
$$

{\bf Case $p=\infty$.} For any $\bx$ we have
\be\label{A3}
\left|\frac{1}{b_n}\sum_{\nu=1}^{b_n}  a_\nu\cV_\bj(\bx-\by^\nu)\right| \le \|\ba\|_\infty \frac{1}{b_n}\sum_{\nu=1}^{b_n}  |\cV_\bj(\bx-\by^\nu)|.
\ee
Further,
\be\label{A4}
|\cV_\bj(\by)| \le (2\cK_{2j_1}(y_1)+\cK_{j_1}(y_1))(2\cK_{2j_2}(y_2)+\cK_{j_2}(y_2)).
\ee
Using our assumption $4j_1j_2\le \gamma b_n$ and Lemma \ref{AL1} we obtain from (\ref{A3}),  (\ref{A4}), and (\ref{FKm}) that
$$
 \|V_\bj\|_{\ell^{\infty_n}_{1,b_n} \to L_\infty} \le 9.
$$
 
 It remains to use the Riesz-Thorin interpolation theorem and complete the proof.
 \end{proof}

\section{Fibonacci points and hyperbolic cross polynomials}
\label{B}

Consider the following special univariate trigonometric polynomials. Let $s$ be a nonnegative integer. Define
$$
\cA_0(x) := 1,\quad \cA_1(x):= \cV_1(x)-1,\quad \cA_s(x):= \cV_{2^{s-1}}(x) - \cV_{2^{s-2}}(x), \quad s\ge 2,
$$
where $\cV_m$ are the de la Vall\'ee Poussin kernels defined above. Then we have
\be\label{B1}
\sum_{j=0}^s \cA_j(x) = \cV_{2^{s-1}}(x),\quad \cV_{2^{-1}}(x) := 1.
\ee

In the multivariate case 
$\bx=(x_1,\dots,x_d)$ and $\bs=(s_1,\dots,s_d)\in \Z_+^d$ define
$$
\cA_\bs(\bx):= \cA_{s_1}(x_1)\cdots \cA_{s_d}(x_d).
$$
For $\bs\in\Z^d_+$
define
$$
\rho (\bs) := \{\bk \in \Z^d : [2^{s_j-1}] \le |k_j| < 2^{s_j}, \quad j=1,\dots,d\}
$$
where $[x]$ denotes the integer part of $x$. We define the step hyperbolic cross
$Q_r$ as follows
$$
Q_r := \cup_{\bs:\|\bs\|_1\le r} \rho(\bs)
$$
and the corresponding set of the hyperbolic cross polynomials as 
$$
\Tr(Q_r) := \{f: f=\sum_{\bk\in Q_r} c_\bk e^{i(\bk,\bx)}\}.
$$
We define the hyperbolic cross de la Vall\'ee Poussin kernels as follows
$$
\cV_{Q_r}(\bx) := \sum_{\|\bs\|_1\le r}\cA_\bs(\bx).
$$
Using notations $\bx^d:=(x_1,\dots,x_{d-1})$, $\bs^d:=(s_1,\dots,s_{d-1})$, and (\ref{B1}), we  rewrite this definition as 
\be\label{B2'}
 \cV_{Q_r}(\bx) := \sum_{\|\bs^d\|_1\le r}\cA_{\bs^d}(\bx^d)\sum_{s_d=0}^{r-\|\bs^d\|_1} \cA_{s_d}(x_d) = \sum_{\|\bs^d\|_1\le r}\cA_{\bs^d}(\bx^d)\cV_{2^{r-\|\bs^d\|_1-1}}(x_d).
 \ee
 In particular, this implies that
 \be\label{B2}
 \|\cV_{Q_r}\|_1 \le C(d)r^{d-1}.
 \ee
 
 We are interested in studying the operator $V_{Q_r,b_n}: \ell^{b_n}_{p,b_n} \to L_p(\T^2)$ defined as
 $$
 V_{Q_r,b_n}(\ba)(\bx):= \frac{1}{b_n} \sum_{\nu=1}^{b_n} a_\nu \cV_{Q_r}(\bx-\by^\nu).
 $$
 Relation (\ref{B2}) implies 
 \be\label{B4}
 \|V_{Q_r,b_n}\|_{\ell^{b_n}_{1,b_n} \to L_1} \le C_1r.
 \ee
 It is clear that 
 \be\label{B5}
 \|V_{Q_r,b_n}\|_{\ell^{b_n}_{\infty,b_n} \to L_\infty} \le \max_\bx\frac{1}{b_n} \sum_{\nu=1}^{b_n} | \cV_{Q_r}(\bx-\by^\nu)|.
 \ee
 \begin{Lemma}\label{BL2} Let $r\in \N$ be such that $2^r \le \gamma b_n$. Then we have
$$
\max_\bx\frac{1}{b_n} \sum_{\nu=1}^{b_n} | \cV_{Q_r}(\bx-\by^\nu)| \le C_\infty r.
$$
\end{Lemma}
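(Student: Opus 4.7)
The plan is to pointwise majorize $|\cV_{Q_r}(\by)|$ by a sum of only $O(r)$ nonnegative trigonometric polynomials whose frequencies lie in $\Gamma(\gamma b_n)$, so that Lemma \ref{AL1} evaluates each Fibonacci average exactly by the corresponding Lebesgue integral. The key is to exploit the decomposition (\ref{B2'}), which in dimension $d=2$ has already collapsed its inner sum via (\ref{B1}), leaving only a single outer index $s_1$.

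First I would write, via (\ref{B2'}) with $d=2$,
\[
\cV_{Q_r}(\bx) = \sum_{s_1=0}^{r} \cA_{s_1}(x_1)\,\cV_{2^{r-s_1-1}}(x_2),
\]
and then majorize each factor pointwise by a nonnegative trig polynomial of the same order. From $\cV_j = 2\cK_{2j}-\cK_j$ one has $|\cV_j|\le 2\cK_{2j}+\cK_j$, a nonnegative polynomial of order strictly less than $2j$ with $L_1$-norm at most $3$; likewise $|\cA_s|\le \cB_s$, where $\cB_s$ is nonnegative of order strictly less than $2^s$ with $\|\cB_s\|_1$ bounded by an absolute constant (use $|\cA_s|\le |\cV_{2^{s-1}}|+|\cV_{2^{s-2}}|$ for $s\ge 2$ and handle $s=0,1$ directly). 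Setting $P_{s_1}(\bx):=\cB_{s_1}(x_1)\bigl(2\cK_{2^{r-s_1}}(x_2)+\cK_{2^{r-s_1-1}}(x_2)\bigr)$ for $s_1<r$, with the convention $P_r(\bx):=\cB_r(x_1)$ matching $\cV_{2^{-1}}=1$, gives a nonnegative trigonometric polynomial whose frequencies $(k_1,k_2)$ obey $\max(|k_1|,1)\le 2^{s_1}$ and $\max(|k_2|,1)\le 2^{r-s_1}$; hence $\max(|k_1|,1)\max(|k_2|,1)\le 2^{r}\le \gamma b_n$, so $P_{s_1}\in \Tr(\Gamma(\gamma b_n))$.

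For any fixed $\bx\in\T^2$ the shift $P_{s_1}(\bx-\cdot)$ remains in $\Tr(\Gamma(\gamma b_n))$, so Lemma \ref{AL1} gives
\[
\frac{1}{b_n}\sum_{\nu=1}^{b_n} P_{s_1}(\bx-\by^\nu) = (2\pi)^{-2}\int_{\T^2} P_{s_1}(\by)\,d\by,
\]
and by Fubini together with $\|\cK_j\|_1=1$, this is bounded by an absolute constant $C$ independent of $s_1$, $r$, and $\bx$. Summing the pointwise inequality $|\cV_{Q_r}(\bx-\by^\nu)|\le \sum_{s_1=0}^{r}P_{s_1}(\bx-\by^\nu)$ first over $\nu$ and then over $s_1=0,\dots,r$ yields the desired bound $C_\infty r$.

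The only delicate point is the dependence on $r$: the cruder majorization $|\cV_{Q_r}|\le \sum_{\|\bs\|_1\le r}|\cA_\bs|$ would produce roughly $r^2$ majorants, one for each lattice point of the simplex $\{s_1+s_2\le r,\ s_j\ge 0\}$, and hence only an estimate $O(r^2)$. Using the contracted form (\ref{B2'}), which packages the inner telescoping sum into a single de la Vall\'ee Poussin kernel, is exactly what keeps the count of majorants linear in $r$.
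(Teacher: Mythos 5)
Your proof is correct and takes essentially the same route as the paper's: both use the contracted representation (\ref{B2'}) with $d=2$ to reduce to $r+1$ terms, majorize each term by a nonnegative product of Fej\'er kernels whose frequencies lie in $\Gamma(2^r)\subset\Gamma(\gamma b_n)$, and apply Lemma \ref{AL1} to evaluate each Fibonacci average as an integral bounded by an absolute constant. You merely spell out the details (the explicit majorants $\cB_s$, the frequency count, the $s_1=r$ convention) that the paper's terse proof leaves implicit, and your closing remark about why the crude $r^2$-term majorization must be avoided is exactly the point of using (\ref{B2'}).
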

\begin{proof} Represent
\be\label{B7}
\cV_{N_j}(t) = 2\cK_{2N_j}(t)-\cK_{N_j}(t).
\ee
 By representation (\ref{B2'}) we write
\be\label{B8}
\frac{1}{b_n} \sum_{\nu=1}^{b_n} | \cV_{Q_r}(\bx-\by^\nu)| \le \sum_{s_1=0}^{r} \frac{1}{b_n} \sum_{\nu=1}^{b_n}|\cA_{s_1}(x_1-y^\nu_1)\cV_{2^{r-s_1-1}}(x_2-y^\nu_2)|.
\ee
Using the fact that the Fej{\'e}r kernel is a nonnegative polynomial and applying Lemma \ref{AL1},  we obtain from (\ref{B8})
$$
\frac{1}{b_n} \sum_{\nu=1}^{b_n} | \cV_{Q_r}(\bx-\by^\nu)| \le C_\infty r.
$$

\end{proof}

Lemma \ref{BL2} and inequalities (\ref{B4}) and (\ref{B5}) imply by the Riesz-Thorin interpolation theorem the following proposition.

\begin{Proposition}\label{BP1} Let $1\le p\le\infty$ and let $r\in \N$ be such that $2^r \le \gamma b_n$. Then we have
\be\label{B9}
 \|V_{Q_r,b_n}\|_{\ell^{b_n}_{p,b_n} \to L_p} \le C_p r,\quad C_p=C_1^{1/p} C_\infty^{1-1/p}.
 \ee
\end{Proposition}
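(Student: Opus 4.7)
The proposition is a direct interpolation from the two endpoint estimates already in hand. My plan is to set up the endpoints $p=1$ and $p=\infty$ first, then apply the Riesz--Thorin interpolation theorem to the linear operator $V_{Q_r,b_n}:\ell^{b_n}_{p,b_n}\to L_p(\T^2)$.

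First I would record the $L_1$ endpoint. The bound (\ref{B4}) says
$$
\|V_{Q_r,b_n}\|_{\ell^{b_n}_{1,b_n}\to L_1}\le C_1 r,
$$
which comes directly from (\ref{B2}) via the triangle inequality and translation invariance of the $L_1$ norm (the convolution-like sum has total mass controlled by $\|\cV_{Q_r}\|_1 \le C(d)r^{d-1}$, which for $d=2$ is $C r$). Next, I would record the $L_\infty$ endpoint: (\ref{B5}) bounds $\|V_{Q_r,b_n}\|_{\ell^{b_n}_{\infty,b_n}\to L_\infty}$ by the pointwise maximum of $\frac{1}{b_n}\sum_{\nu}|\cV_{Q_r}(\bx-\by^\nu)|$, and Lemma \ref{BL2} gives that this quantity is at most $C_\infty r$ provided $2^r\le \gamma b_n$.

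Now I would apply the Riesz--Thorin theorem to the linear map $V_{Q_r,b_n}$, with source spaces $\ell^{b_n}_{1,b_n}$ and $\ell^{b_n}_{\infty,b_n}$ (both built on the same finite atomic measure of mass $1$) and target spaces $L_1(\T^2)$ and $L_\infty(\T^2)$. For $1\le p\le\infty$ the interpolation parameter is $\theta = 1-1/p$, so
$$
\|V_{Q_r,b_n}\|_{\ell^{b_n}_{p,b_n}\to L_p}\le (C_1 r)^{1/p}(C_\infty r)^{1-1/p} = C_1^{1/p}C_\infty^{1-1/p}\, r,
$$
which is exactly the claimed bound with $C_p = C_1^{1/p}C_\infty^{1-1/p}$.

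\textbf{Main obstacle.} There is essentially no obstacle: once Lemma \ref{BL2} and (\ref{B4}) are in place, the proof is a textbook invocation of Riesz--Thorin. The only point that merits verification is that Riesz--Thorin applies cleanly with the normalized counting measure on the $b_n$-point set $\cF_n$ used on the source side (so that $\ell^{b_n}_{p,b_n}$ genuinely forms a complex interpolation scale $(L_1(\mu_{b_n}), L_\infty(\mu_{b_n}))$), which is standard.
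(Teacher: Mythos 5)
Your proof is correct and follows exactly the paper's argument: the paper likewise obtains Proposition \ref{BP1} by combining the $p=1$ endpoint (\ref{B4}), the $p=\infty$ endpoint from (\ref{B5}) together with Lemma \ref{BL2}, and the Riesz--Thorin interpolation theorem, yielding the same constant $C_p=C_1^{1/p}C_\infty^{1-1/p}$.
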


The following function, which we call the Fibonacci Sum of $\cV_{Q_r}$,
$$
\cF\cS\cV_{Q_r}(\bx):= \frac{1}{b_n} \sum_{\nu=1}^{b_n} | \cV_{Q_r}(\bx-\by^\nu)|
$$
played an important role in the proof of Proposition \ref{BP1}. We note the following interesting property of this function.
\begin{Proposition}\label{BP2} Let $r\in \N$ be such that $2^r \le \gamma b_n$. Then we have for all $1\le p\le \infty$ that
$$
C'r \le \|\cF\cS\cV_{Q_r}\|_p \le C''r
$$
with two positive absolute constants $C'$ and $C''$.
\end{Proposition}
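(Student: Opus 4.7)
The plan is to obtain both bounds by collapsing them to the endpoints $p=1$ and $p=\infty$, using that $\T^2$ carries a probability measure (with the normalization $(2\pi)^{-2}d\bx$), so $\|f\|_1 \le \|f\|_p \le \|f\|_\infty$ for all $1\le p\le\infty$. Since $\cF\cS\cV_{Q_r}$ is nonnegative, the $L_1$ norm is particularly easy to compute exactly by swapping the sum and the integral and using translation invariance.

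For the upper bound, I would invoke Lemma \ref{BL2} directly: it gives $\|\cF\cS\cV_{Q_r}\|_\infty \le C_\infty r$, so by monotonicity of $L_p$ norms on a probability space,
\[
\|\cF\cS\cV_{Q_r}\|_p \le \|\cF\cS\cV_{Q_r}\|_\infty \le C_\infty r =: C''r,
\]
which handles every $p\in[1,\infty]$ at once.

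For the lower bound, I would first compute the $L_1$ norm exactly. Since every summand $|\cV_{Q_r}(\bx-\by^\nu)|$ is nonnegative, Fubini gives
\[
\|\cF\cS\cV_{Q_r}\|_1 = \frac{1}{b_n}\sum_{\nu=1}^{b_n}(2\pi)^{-2}\int_{\T^2}|\cV_{Q_r}(\bx-\by^\nu)|\,d\bx = \|\cV_{Q_r}\|_1,
\]
where I used translation invariance of the Haar measure on $\T^2$. Now the known lower bound on the $L_1$ norm of the hyperbolic cross de la Vall\'ee Poussin kernel, already cited in the introduction as inequality (\ref{I3}) at $p=1$ (and available for $d=2$ in \cite{VTbookMA}, p.140), yields $\|\cV_{Q_r}\|_1 \ge c r$. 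Combining this with monotonicity on the probability space,
\[
\|\cF\cS\cV_{Q_r}\|_p \ge \|\cF\cS\cV_{Q_r}\|_1 = \|\cV_{Q_r}\|_1 \ge c r =: C'r,
\]
valid for all $p\in[1,\infty]$.

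There is essentially no obstacle here: all the analytic work has been done in Lemma \ref{BL2} (upper bound on the pointwise maximum of the Fibonacci sum) and in the classical estimate (\ref{I3}) for $\|\cV_{Q_r}\|_1$. The only thing one has to notice is that averaging $|\cV_{Q_r}(\bx-\by^\nu)|$ over $\nu$ and integrating in $\bx$ recovers exactly $\|\cV_{Q_r}\|_1$, which squeezes every $L_p$ norm between two multiples of $r$.
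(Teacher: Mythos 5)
Your proof is correct and follows essentially the same route as the paper: the upper bound via Lemma \ref{BL2} and monotonicity of $L_p$ norms on the probability space $(\T^2,(2\pi)^{-2}d\bx)$, and the lower bound via the identity $\|\cF\cS\cV_{Q_r}\|_1=\|\cV_{Q_r}\|_1$ (translation invariance) combined with the classical estimate $\|\cV_{Q_r}\|_1\ge cr$. The paper's own proof is just a terser version of this; you have merely made explicit the Fubini/translation-invariance step and the reduction to the endpoints $p=1$ and $p=\infty$ that the paper leaves implicit.
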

\begin{proof} The upper bound follows from Lemma \ref{BL2}. The lower bound follows from the known fact (see, for instance, \cite{VTbookMA}, p.140, Lemma 4.2.3)
$$
\|\cV_{Q_r}\|_1 \ge C' r.
$$
\end{proof}

We now show how inequality (\ref{B9}) can be improved in the case $1<p<\infty$. We begin with the case $p=2$ and a simple well known Lemma \ref{BL3}. 

\begin{Lemma}\label{BL3} Let a system $\cU_m=\{u_i\}_{i=1}^m$ satisfy the conditions
\be\label{B10}
\|u_i\|_2=1,\quad \sum_{j=1}^m |\<u_i,u_j\>| \le C_0,\quad i=1,\dots,m.
\ee
Then for any $\ba=(a_1,\dots,a_m) \in \bbC^m$ we have
$$
\|\sum_{i=1}^m a_iu_i\|_2 \le C_0^{1/2}\|\ba\|_2.
$$
\end{Lemma}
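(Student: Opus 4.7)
The plan is to reduce the bound to the diagonal by expanding the square and applying the elementary inequality $|a_i\overline{a_j}|\le (|a_i|^2+|a_j|^2)/2$.

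First I would expand
\[
\Bigl\|\sum_{i=1}^m a_iu_i\Bigr\|_2^2 \;=\; \sum_{i,j=1}^m a_i\overline{a_j}\,\<u_i,u_j\>,
\]
and estimate the absolute value by
\[
\Bigl|\sum_{i,j} a_i\overline{a_j}\<u_i,u_j\>\Bigr| \;\le\; \sum_{i,j} \tfrac{1}{2}\bigl(|a_i|^2+|a_j|^2\bigr)\,|\<u_i,u_j\>|.
\]
The key observation is that $|\<u_i,u_j\>|=|\<u_j,u_i\>|$, so the two halves of the right-hand side are equal after relabelling, and the bound collapses to $\sum_i |a_i|^2 \sum_j |\<u_i,u_j\>|$.

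Applying the hypothesis $\sum_j |\<u_i,u_j\>|\le C_0$ uniformly in $i$, I get
\[
\Bigl\|\sum_{i=1}^m a_iu_i\Bigr\|_2^2 \;\le\; C_0 \sum_{i=1}^m |a_i|^2 \;=\; C_0\|\ba\|_2^2,
\]
and taking square roots yields the claim.

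There is no real obstacle here; the one point worth noting is the use of symmetry of $|\<u_i,u_j\>|$ to avoid introducing a second constant via a row/column Schur test. The argument is essentially the Schur test for an $m\times m$ matrix, which in this symmetric setting gives exactly the constant $C_0^{1/2}$.
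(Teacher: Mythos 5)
Your argument is correct and gives the same constant, but it reaches it by a different route than the paper. You expand $\bigl\|\sum_i a_iu_i\bigr\|_2^2=\sum_{i,j}a_i\overline{a_j}\langle u_i,u_j\rangle$ and kill the off-diagonal terms with the elementary inequality $|a_i\overline{a_j}|\le\tfrac12(|a_i|^2+|a_j|^2)$ plus the symmetry $|\langle u_i,u_j\rangle|=|\langle u_j,u_i\rangle|$; this is the bare-hands Schur test and uses nothing beyond finite sums. The paper instead views the Gram matrix $U=[\langle u_i,u_j\rangle]$ as an operator, bounds $\|U\|_{\ell^m_1\to\ell^m_1}$ and $\|U\|_{\ell^m_\infty\to\ell^m_\infty}$ by $C_0$ (the row and column sums, which coincide by the same Hermitian symmetry you invoke), and then gets $\|U\|_{\ell^m_2\to\ell^m_2}\le C_0$ by the Riesz--Thorin interpolation theorem before applying Cauchy--Schwarz to $\langle \ba,\bar U\ba\rangle$. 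The two proofs are mathematically equivalent in content --- both are the Schur test with weight one --- but yours is more elementary and self-contained, while the paper's fits the operator-interpolation template it reuses throughout (Lemma \ref{AL2}, Proposition \ref{BP1}, Theorem \ref{IT2}), which is presumably why the author phrased it that way. No gap in your version; taking square roots at the end is exactly right.
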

\begin{proof} Consider the Gramm matrix $U:=[\<u_i,u_j\>]_{i,j=1}^m$. Then
\be\label{B12}
\|\sum_{i=1}^m a_iu_i\|_2^2 = \sum_{i=1}^m a_i \sum_{j=1}^m \<u_i,u_j\>\bar a_j = \<\ba,\bar U \ba\>.
\ee
 Consider $U$ as an operator from $\bbC^m$ to $\bbC^m$. Then
$$
 \|U\|_{\ell^m_\infty \to \ell^m_\infty} \le \max_i \sum_{j=1}^m |\<u_i,u_j\>| \le C_0,
$$
 and
$$
 \|U\|_{\ell^m_1 \to \ell^m_1} \le \max_j \sum_{i=1}^m |\<u_i,u_j\>| \le C_0.
$$
Therefore, by the Riesz-Thorin interpolation theorem we obtain
\be\label{B15}
  \|\bar U\|_{\ell^m_2 \to \ell^m_2}=\|U\|_{\ell^m_2 \to \ell^m_2} \le C_0.
 \ee
 By (\ref{B12}) and (\ref{B15}) we conclude
 $$
 \|\sum_{i=1}^m a_iu_i\|_2^2 \le \|\ba\|_2\|\bar U \ba\|_2 \le  \|\bar U\|_{\ell^m_2 \to \ell^m_2}\|\ba\|_2^2 \le C_0\|\ba\|_2^2,
 $$
 which completes the proof of Lemma \ref{BL3}.

\end{proof}

{\bf Proof of Theorem \ref{IT2}.} Bound (\ref{I1}) with $p=\infty$ follows directly from (\ref{B5}) and Lemma \ref{BL2}. The case $p=1$ in (\ref{I1}) is covered by (\ref{B4}). Let us consider the case $p=2$. We now apply Lemma \ref{BL3} in the case $u_i(\bx)=\cV_{Q_r}(\bx-\by^i)/\|\cV_{Q_r}\|_2$. Then
$$
\<u_i,u_j\> = \cV_{Q_r}^*(\by^j-\by^i)\|\cV_{Q_r}\|_2^{-2},
$$
where for $f\in L_2(\T^2)$ we denote
$$
f^*(\bx):= (f\ast f)(\bx) := (2\pi)^{-2}\int_{\T^2} f(\bx-\by)f(\by)d\by.
$$
In order to apply Lemma \ref{BL3} we need to satisfy condition (\ref{B10}).

 \begin{Lemma}\label{BL4} Let $r\in \N$ be such that $2^r \le \gamma b_n$. Then we have
$$
\max_\bx\frac{1}{b_n} \sum_{\nu=1}^{b_n} | \cV_{Q_r}^*(\bx-\by^\nu)| \le C^*_\infty r.
$$
\end{Lemma}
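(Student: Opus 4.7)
The plan is to imitate the proof of Lemma~\ref{BL2} with $\cV_{Q_r}$ replaced by its self-convolution $\cV_{Q_r}^{*}=\cV_{Q_r}\ast\cV_{Q_r}$. Using the representation (\ref{B2'}) with $d=2$ inside each factor, I would first expand
$$
\cV_{Q_r}^{*}(\bx)=\sum_{s,s'=0}^{r}(\cA_{s}\ast\cA_{s'})(x_1)\,(\cV_{2^{r-s-1}}\ast\cV_{2^{r-s'-1}})(x_2).
$$
The one-dimensional Fourier support of $\cA_{s}$ is contained in $\{k:2^{s-2}\le|k|<2^{s}\}$, so $\cA_{s}\ast\cA_{s'}=0$ whenever $|s-s'|\ge 2$, and only the $O(r)$ pairs with $|s-s'|\le 1$ survive.

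For each surviving pair I would pointwise majorize the two univariate factors by nonnegative polynomials built from Fej\'er kernels. Using $|\cV_{j}|\le 2\cK_{2j}+\cK_{j}$ and $|\cA_{s}|\le|\cV_{2^{s-1}}|+|\cV_{2^{s-2}}|$, I set
$$
\tilde P_{s}:=2\cK_{2^{s}}+3\cK_{2^{s-1}}+\cK_{2^{s-2}},\qquad \tilde V_{m}:=2\cK_{2^{m}}+\cK_{2^{m-1}},
$$
which are nonnegative polynomials of degree $<2^{s}$ and $<2^{m}$ with $\|\tilde P_{s}\|_{1}\le 6$, $\|\tilde V_{m}\|_{1}\le 3$, and $|\cA_{s}|\le\tilde P_{s}$, $|\cV_{2^{m-1}}|\le\tilde V_{m}$. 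Applying the pointwise inequality $|f\ast g|\le|f|\ast|g|$ in each variable gives
$$
|\cV_{Q_r}^{*}(\bx-\by^{\nu})|\le \sum_{|s-s'|\le 1}(\tilde P_{s}\ast\tilde P_{s'})(x_{1}-y_{1}^{\nu})\,(\tilde V_{r-s}\ast\tilde V_{r-s'})(x_{2}-y_{2}^{\nu}),
$$
with every summand a nonnegative trigonometric polynomial.

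The critical observation is that convolution on $\mathbb{T}$ \emph{shrinks} the Fourier support to the intersection of the two factors, so $\tilde P_{s}\ast\tilde P_{s'}$ has degree $<2^{\min(s,s')}$ and $\tilde V_{r-s}\ast\tilde V_{r-s'}$ has degree $<2^{r-\max(s,s')}$. The tensor product therefore lies in $\Tr(R(\bj))$ with $j_{1}j_{2}<2^{r-|s-s'|}\le 2^{r}\le\gamma b_{n}$, so Lemma~\ref{AL1} applies and, because each summand is nonnegative, the Fibonacci average equals the integral:
$$
\frac{1}{b_{n}}\sum_{\nu=1}^{b_{n}}(\tilde P_{s}\ast\tilde P_{s'})(x_{1}-y_{1}^{\nu})(\tilde V_{r-s}\ast\tilde V_{r-s'})(x_{2}-y_{2}^{\nu})=\|\tilde P_{s}\ast\tilde P_{s'}\|_{1}\|\tilde V_{r-s}\ast\tilde V_{r-s'}\|_{1}\le 36\cdot 9,
$$
where the last inequality uses Young's inequality and the bounds above. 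Summing over the at most $3(r+1)$ nonzero pairs $(s,s')$ yields the desired estimate $\le C^{*}_{\infty}r$.

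The main obstacle I anticipate is the Fourier-support bookkeeping in the degree count: the product-of-degrees estimate $<2^{r}$ that matches the hypothesis of Lemma~\ref{AL1} holds precisely because convolution in each variable reduces support to the intersection, rather than the sum, of the factor supports. Once this is noted, the rest of the argument is routine Fej\'er-kernel arithmetic combined with a simple geometric count of the surviving terms.
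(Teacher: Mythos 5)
Your proof is correct and follows essentially the same route as the paper's (much terser) argument: expand $\cV_{Q_r}^*$ via (\ref{B2'}) applied to both factors, discard the pairs with $|s-s'|\ge 2$ using $\cA_s\ast\cA_{s'}=0$, majorize the surviving $O(r)$ terms by nonnegative Fej\'er-kernel combinations whose convolutions stay inside $\Gamma(\gamma b_n)$, and apply Lemma \ref{AL1}. The only cosmetic point is that the formulas for $\tilde P_s$ need the obvious adjustment at $s=0,1$ (where $\cA_0=1$ and $\cA_1=\cV_1-1$), which does not affect the argument.
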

\begin{proof} The argument follows the same lines as the proof of Lemma \ref{BL2}. We use representations (\ref{B2'}) with $d=2$ and (\ref{B7}), the fact that if $f$ is nonnegative then $f^*$ is also nonnegative, and the fact that 
$$
\cA_s \ast \cA_{s'} =0 \quad \text{provided}\quad |s-s'|\ge 2.
$$
\end{proof}
Then Lemma \ref{BL3} and Lemma \ref{BL4} imply
$$
\left\|\frac{1}{b_n} \sum_{\nu=1}^{b_n} a_\nu \cV_{Q_r}(\bx-\by^\nu)\right\|_2=\left\|\sum_{\nu=1}^{b_n}\frac{a_\nu \|\cV_{Q_r}\|_2}{b_n} u_\nu(\bx)\right\|_2
$$
$$
  \le \|\cV_{Q_r}\|_2^{-1}(b_nC_\infty^*r)^{1/2}\|\ba\|_2 \|\cV_{Q_r}\|_2/b_n = (C_\infty^*r)^{1/2}\|\ba\|_{2,b_n}.
$$
Thus, we have proved (\ref{I1}) in three cases $p=\infty$, $p=1$, and $p=2$. Applying the Riesz-Thorin interpolation theorem two times, for the pairs $(1,2)$ and $(2,\infty)$, we obtain (\ref{I1}) for all $1\le p\le \infty$ and complete the proof.

Consider now the following kernel, which is closely related to $\cV_{Q_r}$
$$
\Delta\cV_{Q_r} := \cV_{Q_r} - \cV_{Q_{r-1}} = \sum_{\|bs\|_1=r} \cA_\bs.
$$
 It turns out that we can use other technique and prove a better bound in the case $2<p<\infty$ than (\ref{I1}).

 \begin{Theorem}\label{BT2} Let $r\in \N$ be such that $2^r \le \gamma b_n/4$. Then  
  for $1\le p<\infty$ we have
$$
\|\De V_{Q_r,b_n}\|_{\ell^{b_n}_{p,b_n} \to L_p} \le C(p) r^{\max(1/p,1/2)}.
$$
\end{Theorem}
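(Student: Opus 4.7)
The plan is to decompose the kernel dyadically, $\De\cV_{Q_r} = \sum_{\|\bs\|_1=r}\cA_\bs$, and work layer by layer. Writing
$$
\De V_{Q_r,b_n}(\ba) = \sum_{\|\bs\|_1=r} g_\bs,\qquad g_\bs(\bx) := \frac{1}{b_n}\sum_{\nu=1}^{b_n} a_\nu\,\cA_\bs(\bx-\by^\nu),
$$
the first step is a uniform layer estimate: $\|g_\bs\|_p\le C\|\ba\|_{p,b_n}$ for every $1\le p\le\infty$ and every $\bs$ with $\|\bs\|_1=r$, with an absolute $C$. This is a direct analogue of Lemma \ref{AL2}: expanding $\cA_s=\cV_{2^{s-1}}-\cV_{2^{s-2}}$ and then $|\cV_j|\le 2\cK_{2j}+\cK_j$ dominates $|\cA_\bs|$ by a constant-sized sum of tensor products $\cK_{j_1}(x_1)\cK_{j_2}(x_2)$ with $j_i\le 2^{s_i}$. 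The hypothesis $2^r\le\gamma b_n/4$ provides $4\cdot 2^{s_1}\cdot 2^{s_2}=2^{r+2}\le\gamma b_n$ for every $\bs$ on the level set $\|\bs\|_1=r$, so Lemma \ref{AL1} applies to each such product; combined with $\|\cA_\bs\|_1\le C$ and Riesz--Thorin, this yields the claimed uniform $L_p$ bound.

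I would then handle the endpoints $p=1$ and $p=2$ directly and interpolate. For $p=1$, the triangle inequality and $|\{\bs:\|\bs\|_1=r\}|=r+1$ give $\|\De V_{Q_r,b_n}(\ba)\|_1 \le C(r+1)\|\ba\|_{1,b_n}$, which matches $r^{1/p}$ at $p=1$. For $p=2$, the Fourier support of $\cA_\bs$ lies in the rectangular shell $\{\bk:2^{s_j-2}\le|k_j|<2^{s_j}\}$, and on the level set $\|\bs\|_1=r$ these shells have bounded overlap: $\cA_\bs\ast\overline{\cA_{\bs'}}\neq 0$ forces $|s_j-s'_j|\le 1$ for $j=1,2$, leaving only $O(1)$ neighbours of each $\bs$. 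Hence Parseval yields
$$
\Bigl\|\sum_\bs g_\bs\Bigr\|_2^2 \le C\sum_\bs\|g_\bs\|_2^2 \le C(r+1)\|\ba\|_{2,b_n}^2,
$$
i.e.\ the $r^{1/2}$ bound. Riesz--Thorin between $p=1$ and $p=2$ then gives $r^{1/p}$ for $1\le p\le 2$.

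For $2<p<\infty$ I would invoke the two-dimensional rectangular Littlewood--Paley square-function inequality on $\T^2$. Writing $f=\sum_\bs g_\bs$ and letting $\De_\bn$ denote the standard 2D dyadic LP projections, each $g_\bs$ has Fourier support meeting only $O(1)$ blocks $\De_\bn$, so
$$
\|f\|_p \le C(p)\Bigl\|\Bigl(\sum_\bn|\De_\bn f|^2\Bigr)^{1/2}\Bigr\|_p \le C'(p)\Bigl\|\Bigl(\sum_\bs|g_\bs|^2\Bigr)^{1/2}\Bigr\|_p.
$$
Since $p/2\ge 1$, the triangle inequality in $L_{p/2}$ converts the right-hand side:
$$
\Bigl\|\Bigl(\sum_\bs|g_\bs|^2\Bigr)^{1/2}\Bigr\|_p^2 = \Bigl\|\sum_\bs|g_\bs|^2\Bigr\|_{p/2} \le \sum_\bs\|g_\bs\|_p^2 \le C(r+1)\|\ba\|_{p,b_n}^2,
$$
producing $\|\De V_{Q_r,b_n}(\ba)\|_p\le C(p)r^{1/2}\|\ba\|_{p,b_n}$. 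Together with the first regime, this gives $r^{\max(1/p,1/2)}$.

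The main obstacle, and the point where Theorem \ref{BT2} genuinely improves Theorem \ref{IT2}, is precisely the $p>2$ step. A naive triangle inequality over the $r+1$ layers reproduces $r^{1-1/p}$, which is essentially what Theorem \ref{IT2} already provides; extracting the better exponent $1/2$ requires exploiting the (approximate) orthogonality between the $g_\bs$'s via the rectangular Littlewood--Paley inequality. This is also why the statement must exclude $p=\infty$: the Littlewood--Paley constant $C(p)$ blows up as $p\to\infty$.
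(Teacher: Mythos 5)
Your proof is correct and follows essentially the same route as the paper: decompose $\De\cV_{Q_r}$ into the layers $\cA_\bs$ with $\|\bs\|_1=r$, bound each layer by an analog of Lemma \ref{AL2}, and sum the layers via the Littlewood--Paley square-function inequality for $2<p<\infty$. The only (immaterial) difference is in the range $1<p<2$, where the paper invokes the dual Littlewood--Paley inequality $\|f\|_p^p\le C(p)\sum_\bs\|g_\bs\|_p^p$ directly, while you obtain the same $r^{1/p}$ bound from the endpoints $p=1$ and $p=2$ by Riesz--Thorin.
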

\begin{proof} Rewrite
$$
f(\bx):= \frac{1}{b_n} \sum_{\nu=1}^{b_n} a_\nu \De \cV_{Q_r}(\bx-\by^\nu)   = \sum_{\|\bs\|_1= r}\frac{1}{b_n} \sum_{\nu=1}^{b_n} a_\nu \cA_{\bs}(\bx-\by^\nu).
$$
It is well known that the Littlewood-Paley theorem implies the following inequality (see, for instance \cite{VTbookMA}, p.513) for $2<p<\infty$
\be\label{B21}
\|f\|_p^2 \le C(p) \sum_{\|\bs\|_1= r}\left\|\frac{1}{b_n} \sum_{\nu=1}^{b_n} a_\nu \cA_{\bs}(\bx-\by^\nu)\right\|_p^2,
\ee
and for $1< p\le 2$
 \be\label{B22}
\|f\|_p^p \le C(p) \sum_{\|\bs\|_1= r}\left\|\frac{1}{b_n} \sum_{\nu=1}^{b_n} a_\nu \cA_{\bs}(\bx-\by^\nu)\right\|_p^p.
\ee
In case $p=1$ inequality (\ref{B22}) obviously holds as well. 

Applying an analog of Lemma \ref{AL2}, we obtain
 \be\label{B23}
\left\|\frac{1}{b_n} \sum_{\nu=1}^{b_n} a_\nu \cA_{\bs}(\bx-\by^\nu)\right\|_p \le C \|\ba\|_{p,b_n}.
\ee
It remains to substitute (\ref{B23}) in (\ref{B21}) and in (\ref{B22}) and complete the proof.

\end{proof}

\section{The Korobov point sets}
\label{C}

In this section we extend results of Section \ref{A} to the case $d\ge 3$. Instead of the Fibonacci point sets we consider the Korobov point sets. We obtain results somewhat similar to those from Section \ref{A} but not as sharp as results on the Fibonacci point sets. It is a well known phenomenon in numerical integration.  Here we study the Korobov cubature formulas instead of the 
 Fibonacci cubature formulas. We prove a conditional result under the assumption that the Korobov cubature formulas are exact on a certain subspace of trigonometric polynomials with frequencies  from a hyperbolic cross. 
 There are results that guarantee existence of such cubature formulas.  
  
 Let $m\in\N$, $\mathbf h := (h_1,\dots,h_d)$, $h_1,\dots,h_d\in\Z$.
We consider the cubature formulas
$$
P_m (f,\mathbf h):= m^{-1}\sum_{\mu=1}^{m}f\left ( 2\pi\left  \{\frac{\mu h_1}
{m}\right\},\dots, 2\pi\left \{\frac{\mu h_d}{m}\right\}\right),
$$
which are called the {\it Korobov cubature formulas}.  In the case $d=2$, $m=b_n$, $\mathbf h = (1,b_{n-1})$ we have
$$
P_m (f,\mathbf h) =\Phi_n (f):= \frac{1}{b_n}\sum_{\by\in \cF_n} f(\by).
$$

Denote 
$$
\mathbf w^{\mu}:=\left ( 2\pi\left \{\frac{\mu h_1}
{m}\right\},\dots, 2\pi\left \{\frac{\mu h_d}{m}\right\}\right), \quad \mu = 1,\dots,m,\quad \cR_m(\bh):=\{\bw^\mu\}_{\mu=1}^m.
$$
The set $\cR_m(\bh)$ is called the {\it Korobov point set}. Further, denote
$$
S(\mathbf k, \bh) \,:=\, P_m\left(e^{i(\mathbf k,\mathbf x)},\bh\right)
\,=\, m^{-1}\sum_{\mu=1}^{m}e^{i(\mathbf k,\mathbf w^{\mu})}.
$$
Note that
\be\label{C1}
P_m (f,\bh) =\sum_{\mathbf k}\hat f(\mathbf k)\, S(\mathbf k,\bh),\quad 
\hat f(\bk) := (2\pi)^{-d}\int_{\T^d} f(\bx)\, e^{-i(\bk,\bx)}d\bx,
\ee
where for the sake of simplicity we may assume that $f$ is a
trigonometric polynomial. It is clear that (\ref{C1}) holds for $f$ with absolutely convergent Fourier series. 

It is easy to see that the following relation holds
\be\label{C2}
S(\mathbf k,\bh)=
\begin{cases}
1&\quad\text{ for }\quad \mathbf k\in \cL(m,\bh),\\
0&\quad\text{ for }\quad \mathbf k\notin \cL(m,\bh),
\end{cases}
\ee
where
$$
\cL(m,\mathbf h) := \bigl\{ \mathbf k:(\mathbf h,\mathbf k) \equiv 0 \quad
\pmod{m}\bigr\},\quad \cL(m,\mathbf h)':= \cL(m,\bh)\backslash\{\mathbf 0\} .
$$
  For $N\in\N$ define the {\it hyperbolic cross}   by
$$
\Gamma(N,d):= \left\{\bk= (k_1,\dots,k_d)\in\Z^d\colon \prod_{j=1}^d \max(|k_j|,1) \le N\right\}.
$$
Denote 
$$
\Tr(N,d) := \left\{f\, :\, f(\bx)= \sum_{\bk\in \Gamma(N,d)} c_\bk e^{i(\bk,\bx)}\right\}.
$$
It is easy to see that the condition 
\be\label{ex}
P_m(f,\bh) = \hat f(\mathbf 0), \quad f\in \Tr(N,d),
\ee
is equivalent to the condition
\be\label{exs}
\Gamma(N,d)\cap \cL(m,\bh)'  =
\varnothing.
\ee
\begin{Definition}\label{exact} We say that the Korobov cubature formula $P_m(\cdot,\bh)$ is exact on $\Tr(N,d)$ if condition \eqref{ex} (equivalently, condition \eqref{exs}) is satisfied.
\end{Definition} 

 {\bf Special Korobov point sets.} Let $L\in \N$ be given. Clearly, we are interested in as small $m$ as possible such that there exists a Korobov cubature formula, which is exact on $\Tr(L,d)$. In the case of $d=2$ the Fibonacci cubature formula is an ideal in a certain sense choice. 
There is no known Korobov cubature formulas in case $d\ge 3$, which are as good as  the Fibonacci cubature formula in case $d=2$. We now formulate some known results in this direction. Consider  a special case $\mathbf h = (1,h,h^2,\dots,h^{d-1})$, $h\in\N$. In this case we write in the notation of $\cR_m(\bh)$ and $P_m(\cdot,\bh)$ the scalar $h$ instead of the vector $\mathbf h$, namely, $\cR_m(h,d)$ and $P_m(\cdot,h,d)$. The following Lemma \ref{CL1} is a well known result (see, for instance \cite{VTbookMA}, p.285). 

\begin{Lemma}\label{CL1} Let $m$ and $L$ be a prime  
and a natural number, respectively, such that
\be\label{C3}
\bigl|\Gamma(L,d)\bigr| < (m-1)/d .
\ee
Then there is a natural number $h\in [1,m)$ such that for all
$\mathbf k\in\Gamma(L,d)$, $\mathbf k\ne\mathbf 0$
\be\label{C4}
k_1 + hk_2 +\dots+h^{d-1}k_d\not\equiv 0\qquad \pmod {m}.
\ee
Therefore, for any $f\in \Tr(L,d)$ we have 
$$
P_m(f,h,d) = (2\pi)^{-d}\int_{\T^d} f(\bx)d\bx.
$$
\end{Lemma}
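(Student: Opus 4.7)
The plan is to produce $h$ by a counting argument that uses primality of $m$ to turn condition (\ref{C4}) into a statement about roots of polynomials over the field $\mathbb{F}_m := \Z/m\Z$, and then to read off the integration identity directly from (\ref{C1})--(\ref{C2}).

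For each nonzero $\bk = (k_1,\dots,k_d) \in \Gamma(L,d)$ I would form the polynomial $P_\bk(h) := k_1 + k_2 h + \cdots + k_d h^{d-1} \in \Z[h]$, so that (\ref{C4}) is precisely the assertion that $h$ is not a root of the reduction $\bar P_\bk \in \mathbb{F}_m[h]$. First I would check that $\bar P_\bk$ is nonzero: $\Gamma(L,d)$ contains the $2L+1$ points $(k,0,\dots,0)$ with $|k|\le L$, so $2L+1 \le |\Gamma(L,d)| < (m-1)/d \le m-1$, and hence $|k_j| \le L < m$ for every coordinate of every $\bk \in \Gamma(L,d)$. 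Since $\bk \ne \mathbf 0$ and each coefficient has absolute value less than $m$, at least one coefficient survives reduction mod $m$, so $\bar P_\bk$ is a nonzero polynomial of degree at most $d-1$.

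Now, because $m$ is prime, $\mathbb{F}_m$ is a field, and $\bar P_\bk$ therefore has at most $d-1$ roots. Summing the root counts over the $|\Gamma(L,d)|-1$ nonzero frequencies, the total number of $h \in \{0,1,\dots,m-1\}$ that fail (\ref{C4}) for at least one $\bk$ is bounded by
$$
(d-1)\bigl(|\Gamma(L,d)|-1\bigr) \;<\; (d-1)\cdot\frac{m-1}{d} \;=\; (m-1)-\frac{m-1}{d} \;<\; m-1.
$$
Since there are $m-1$ candidates in $\{1,\dots,m-1\}$, at least one choice of $h$ satisfies (\ref{C4}) simultaneously for all nonzero $\bk \in \Gamma(L,d)$.

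For the integration identity, take such an $h$ and set $\bh = (1,h,\dots,h^{d-1})$. Property (\ref{C4}) says $(\bh,\bk) \not\equiv 0 \pmod m$ for every $\bk \in \Gamma(L,d)\setminus\{\mathbf 0\}$, so by (\ref{C2}) we have $S(\bk,\bh)=0$ on $\Gamma(L,d)\setminus\{\mathbf 0\}$ while $S(\mathbf 0,\bh)=1$. Any $f \in \Tr(L,d)$ has Fourier support in $\Gamma(L,d)$, so (\ref{C1}) collapses to $P_m(f,h,d) = \hat f(\mathbf 0) = (2\pi)^{-d}\int_{\T^d} f(\bx)d\bx$. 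No step presents a genuine obstacle; the heart of the argument is simply the observation that primality of $m$ lets one bound, by $d-1$, the number of $h$ killing the linear form $(\bh,\bk) \pmod m$ viewed as a polynomial in $h$, after which hypothesis (\ref{C3}) is calibrated exactly so that a union bound over $\bk \in \Gamma(L,d)\setminus\{\mathbf 0\}$ succeeds.
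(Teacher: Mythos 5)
Your proof is correct and is essentially the classical Korobov-style argument that the paper relies on: the paper gives no proof of Lemma \ref{CL1} at all, citing it as well known, and the standard proof behind that citation is exactly your counting argument (each nonzero $\bar P_\bk$ over $\mathbb{F}_m$ has at most $d-1$ roots, and hypothesis (\ref{C3}) makes the union bound over $\Gamma(L,d)\setminus\{\mathbf 0\}$ leave a good $h$ in $\{1,\dots,m-1\}$). Your preliminary check that $L<m$, so that no nonzero coefficient vanishes mod $m$, and the final reduction of the integration identity to (\ref{C1})--(\ref{C2}) are both handled correctly.
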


Note that the cardinality of $\Gamma(L,d)$ is of order $L(\log L)^{d-1}$ and, therefore, the largest $L$, satisfying (\ref{C3}), is of order $m(\log m)^{1-d}$. 

In the same way as Theorem \ref{IT1} was derived from Lemma \ref{AL1} in Section \ref{A} 
the following Theorem \ref{CT1} can be derived for special Korobov point sets. We do not present this proof here. 

\begin{Theorem}\label{CT1} Let the Korobov cubature formula $P_m(\cdot,\bh)$ be exact on $\Tr(L,d)$ and let $1\le p\le\infty$. Then  the Korobov point set $\cR_m(\bh)$ provides the following two properties for the collection $\cC'(L,d)$.  

{\bf (I).} For any $\bj\in\N^d$, satisfying $2^d\prod_{i=1}^dj_i\le L$, and any $f,g \in 
\Tr(R(\bj))$ we have
$$
(f*g)(\bx) = \frac{1}{m} \sum_{\nu=1}^{m} f(\bw^\nu)g(\bx-\bw^\nu).
$$

{\bf (II).} Let $\cV_\bj(\bx):= \prod_{i=1}^d \cV_{j_i}(x_i)$ be the de la Vall\'ee Poussin kernels for $R(\bj)$.
Then for any $\bj$ satisfying $2^d\prod_{i=1}^dj_i\le L$ and any $f\in \Tr(R(\bj))$ we have
$$
\left\|\frac{1}{m}\sum_{\nu=1}^{m}  a_\nu\cV_\bj(\bx-\bw^\nu)\right\|_p \le 3^d \|\ba\|_{p,m},\quad \ba = (a_1,\dots,a_{m}).
$$
\end{Theorem}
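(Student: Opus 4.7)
The plan is to mirror exactly the proof of Theorem \ref{IT1}, with the Fibonacci exactness of Lemma \ref{AL1} replaced by the hypothesis that $P_m(\cdot,\bh)$ is exact on $\Tr(L,d)$. The whole argument hinges on a single degree-counting observation: every trigonometric polynomial that will appear in the analysis lies in $\Tr(L,d)$ under the assumption $2^d\prod_i j_i \le L$.

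For part (I), I would fix $\bx\in\T^d$ and regard $F(\by):=f(\by)g(\bx-\by)$ as a trigonometric polynomial in $\by$. Since $f,g\in \Tr(R(\bj))$, their Fourier spectra are contained in $R(\bj)$, so the spectrum of $F$ lies in $R(\bj)+R(\bj)\subset R(2\bj)$, i.e.\ coordinates satisfy $|k_i|\le 2j_i-1$. For such $\bk$ one has $\prod_i\max(|k_i|,1)\le\prod_i(2j_i)=2^d\prod_i j_i\le L$, so $F\in\Tr(L,d)$. By exactness (Definition \ref{exact}),
\[
(f*g)(\bx)=(2\pi)^{-d}\int_{\T^d}F(\by)\,d\by = P_m(F,\bh)=\frac{1}{m}\sum_{\nu=1}^m f(\bw^\nu)g(\bx-\bw^\nu).
\]

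For part (II), I would follow the $d=2$ argument in Lemma \ref{AL2}: establish the endpoint bounds $p=1$ and $p=\infty$ with constant $3^d$, then interpolate by Riesz--Thorin. The case $p=1$ is immediate from $\|\cV_{j_i}\|_1\le 3$ (from $\cV_j=2\cK_{2j}-\cK_j$ and $\|\cK_j\|_1=1$), which gives $\|\cV_\bj\|_1\le 3^d$ and hence $\|V_\bj(\ba)\|_1\le 3^d\|\ba\|_{1,m}$ by the triangle inequality. The case $p=\infty$ uses the pointwise majorization
\[
|\cV_\bj(\by)|\le \prod_{i=1}^d\bigl(2\cK_{2j_i}(y_i)+\cK_{j_i}(y_i)\bigr),
\]
which is a sum of $2^d$ products of Fej\'er kernels, each a nonnegative trigonometric polynomial of coordinate-wise degree at most $2j_i-1$, hence again lying in $\Tr(L,d)$ under $2^d\prod_i j_i\le L$. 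Exactness of the Korobov formula on each such product then yields
\[
\frac{1}{m}\sum_{\nu=1}^m |\cV_\bj(\bx-\bw^\nu)|\le \prod_{i=1}^d(2\|\cK_{2j_i}\|_1+\|\cK_{j_i}\|_1)=3^d,
\]
which gives $\|V_\bj(\ba)\|_\infty\le 3^d\|\ba\|_{\infty,m}$. Riesz--Thorin interpolation between these two endpoints produces the stated bound $\|V_\bj\|_{\ell^m_{p,m}\to L_p}\le 3^d$ for every $1\le p\le\infty$.

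There is no substantial obstacle; the only point requiring care is the degree bookkeeping, namely verifying that both $F(\by)$ in part (I) and each summand of the Fej\'er majorant in part (II) belong to $\Tr(L,d)$. The constant $2^d$ in the hypothesis $2^d\prod_i j_i\le L$ is precisely what absorbs the doubling of frequencies coming from the convolution (respectively from the $2\cK_{2j_i}$ terms in the de la Vall\'ee Poussin identity), so everything lines up without loss.
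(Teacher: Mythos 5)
Your proof is correct and is exactly the argument the paper intends: the paper omits the proof of Theorem \ref{CT1}, stating only that it is derived from the exactness hypothesis in the same way that Theorem \ref{IT1} was derived from Lemma \ref{AL1}, which is precisely what you carry out. Your degree bookkeeping (spectra in $R(2\bj)$, hence in $\Gamma(2^d\prod_i j_i,d)\subset\Gamma(L,d)$) and the two-endpoint-plus-Riesz--Thorin scheme for part (II) match the $d=2$ proof of Lemma \ref{AL2} with the constant $9=3^2$ correctly generalized to $3^d$.
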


\begin{Remark}\label{CR1} Lemma \ref{CL1} implies that for any $L\in\N$ there exist $\bh$ 
and $m\le C(d)L(\log L)^{d-1}$ with some positive $C(d)$ such that statements (I) and (II) of Theorem \ref{CT1} hold.
\end{Remark}

\section{Discussion}
\label{D}

The property of a numerical method of approximation or presentation (recovery) to be universal is very important. The classical concept of unsaturated methods (see, for instance, \cite{Ba}) is the universality property with respect to smoothness. Later, universality with respect to 
anisotropy was introduced (see \cite{Tem16}). Study of universality in numerical integration (see \cite{Tem23}, \cite{T11}, and the book \cite{VTbookMA}, section 6.8) and in linear approximation (see \cite{Tem16} and \cite{VTbookMA}, section 5.4) brought new phenomena. Universality concept under the names {\it adaptive learning} and {\it distribution-free theory of regression} 
is very important in learning theory (see \cite{GKKW} and \cite{VTbook}, Chapter 4). 

Recently, because of demand on nonlinear approximation importance of the universality property has increased. We illustrate it on the example of sparse approximation. Suppose we have a finite dictionary $\D_n:=\{g_j\}_{j=1}^n$ of functions from $L_p(\Omega,\mu)$. Applying the strategy of sparse $m$-term approximation with respect to 
$\D_n$ we obtain a collection of all subspaces spanned by at most $m$ elements of $\D_n$ as a possible source of approximating (representing) elements. Therefore, we would like to build a discretization scheme, which works well for all such subspaces. This kind of discretization is called {\it universal discretization}. There are some known results on universal discretization, which show that it is a very interesting and deep area of research (see, for instance, \cite{VT159}, \cite{DPTT}, \cite{DT}).  

 For a more detailed discussion of 
universality in approximation and learning theory we refer the reader to \cite{Tem16}, 
 \cite{T11}, \cite{DTU}, \cite{VT160}, \cite{GKKW},
\cite{BCDDT}, \cite{VT113}.  

\subsection{Discretization} 
It is well known and easy to check that for centrally symmetric $Q$ the problem of discretization 
of the convolution on $\Tr(Q)$: For any $f,g\in\Tr(Q)$
$$
(2\pi)^{-d}\int_{\T^d} f(\by)g(\bx-\by)d\by = \frac{1}{m}\sum_{\nu=1}^m f(\xi^\nu)g(\bx-\xi^\nu)
$$
is equivalent to the problem of exact discretization of the $L_2$ norm on $\Tr(Q)$: For any $f\in \Tr(Q)$
$$
\|f\|_2^2 =  \frac{1}{m}\sum_{\nu=1}^m |f(\xi^\nu)|^2.
$$
Thus, part (I) of Theorem \ref{IT1} is equivalent to the exact discretization of the $L_2$ norm on $\Tr(R(\bj))$. 

We now discuss part (II) of Theorem \ref{IT1}. We prove the following conditional statement.

\begin{Proposition}\label{DP1} Suppose that for a given $Q\subset \Z^d$ the point set $\{\xi^\nu\}_{\nu=1}^m$ has the following properties. There exists an even function $\cV_Q$ such that
for any $f\in \Tr(Q)$ we have
\be\label{D1}
f(\bx) = (2\pi)^{-d} \int_{\T^d} f(\by) \cV_Q(\bx-\by)d\by  = \frac{1}{m}\sum_{\nu=1}^m f(\xi^\nu)\cV_Q(\bx-\xi^\nu)
\ee
and for a given $1\le p\le\infty$ we have for all $\ba=(a_1,\dots,a_m)$ and for $q=p$ and $q=p'$, where $p'$ is dual to $p$ ($1/p+1/p'=1$)
\be\label{D2}
\left\|\frac{1}{m}\sum_{\nu=1}^m a_\nu\cV_Q(\bx-\xi^\nu)\right\|_q \le C(d,p)\|\ba\|_{q,m}.
\ee
Then, the following sampling discretization inequalities hold for all $f\in\Tr(Q)$
\be\label{D3}
C_1(d,p)\|f\|_p^p \le \frac{1}{m}\sum_{\nu=1}^m |f(\xi^\nu)|^p \le C_2(d,p)\|f\|_p^p.
\ee
\end{Proposition}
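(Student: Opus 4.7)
The plan is to derive each of the two inequalities in (\ref{D3}) from the hypotheses, with the lower inequality (the $C_1$ bound) coming directly from applying (\ref{D2}) with $q=p$ to the sampling identity in (\ref{D1}), and the upper inequality (the $C_2$ bound) coming from a duality argument that invokes (\ref{D2}) with $q=p'$.

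For the lower inequality $C_1(d,p)\|f\|_p^p \le \frac{1}{m}\sum_{\nu=1}^m |f(\xi^\nu)|^p$, I would use the representation on the right-hand side of (\ref{D1}) together with hypothesis (\ref{D2}) at $q=p$ applied to the specific choice $a_\nu = f(\xi^\nu)$, obtaining
$$\|f\|_p = \left\|\frac{1}{m}\sum_{\nu=1}^m f(\xi^\nu)\cV_Q(\bx-\xi^\nu)\right\|_p \le C(d,p)\left(\frac{1}{m}\sum_{\nu=1}^m |f(\xi^\nu)|^p\right)^{1/p},$$
then raise both sides to the $p$-th power to get $C_1(d,p)=C(d,p)^{-p}$.

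For the upper inequality $\frac{1}{m}\sum_{\nu=1}^m |f(\xi^\nu)|^p \le C_2(d,p)\|f\|_p^p$, I would use the duality between $\|\cdot\|_{p,m}$ and $\|\cdot\|_{p',m}$ to pick $\ba\in\bbC^m$ with $\|\ba\|_{p',m}\le 1$ such that
$$\left(\frac{1}{m}\sum_{\nu=1}^m |f(\xi^\nu)|^p\right)^{1/p} = \frac{1}{m}\sum_{\nu=1}^m a_\nu \overline{f(\xi^\nu)}.$$
Then I would substitute the integral representation $f(\xi^\nu)=(2\pi)^{-d}\int_{\T^d}f(\by)\cV_Q(\xi^\nu-\by)d\by$ from (\ref{D1}), swap sum and integral, and use that $\cV_Q$ is even to rewrite $\cV_Q(\xi^\nu-\by)=\cV_Q(\by-\xi^\nu)$, so that the right-hand side is a pairing of $f$ against $\frac{1}{m}\sum_\nu a_\nu \cV_Q(\cdot-\xi^\nu)$ (modulo conjugation). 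Applying H\"older's inequality on $\T^d$ and then hypothesis (\ref{D2}) with $q=p'$ to this expression gives a bound of $C(d,p)\|f\|_p\|\ba\|_{p',m}\le C(d,p)\|f\|_p$, and raising to the $p$-th power yields $C_2(d,p)=C(d,p)^p$.

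The only subtle point is the bookkeeping of complex conjugates coming from the duality pairing, which is trivial when $\cV_Q$ is real-valued (the case for all kernels of interest in this paper, e.g., the de la Vall\'ee Poussin kernels); the hypothesis that $\cV_Q$ is merely even is enough to rewrite $\cV_Q(\xi^\nu-\by)=\cV_Q(\by-\xi^\nu)$ in the integral step. I do not anticipate a serious obstacle: each direction is essentially a one-step consequence of (\ref{D1}) combined with a single application of (\ref{D2}), and the duality passage to the exponent $p'$ is the only non-automatic move.
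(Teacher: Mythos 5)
Your proposal is correct and follows essentially the same route as the paper: the lower bound is the same one-line application of (\ref{D2}) with $q=p$ to the sampling identity in (\ref{D1}), and your duality argument for the upper bound is just an abstract phrasing of the paper's explicit choice of the extremal dual vector $a_\nu=\varepsilon_\nu|f(\xi^\nu)|^{p-1}$ with $\varepsilon_\nu=\sign f(\xi^\nu)$, followed by the same swap of sum and integral, H\"older on $\T^d$, and (\ref{D2}) with $q=p'$. The constants $C_1(d,p)=C(d,p)^{-p}$ and $C_2(d,p)=C(d,p)^{p}$ you obtain match the paper's.
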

\begin{proof} The left inequality in (\ref{D3}) with $C_1(d,p)=C(d,p)^{-p}$ directly follows from (\ref{D2}) and (\ref{D1}). We prove the right inequality in (\ref{D3}). Using (\ref{D1}) and (\ref{D2}), we obtain ($\varepsilon_\nu:= \sign f(\xi^\nu)$)
\begin{align*}
\frac{1}{m}\sum_{\nu=1}^m |f(\xi^\nu)|^p & =
\frac{1}{m}\sum_{\nu=1}^{m} f(\xi^\nu)\varepsilon_\nu \bigl| f(\xi^\nu) \bigr|^{p-1} =\\
&=(2\pi)^{-d}\int_{\T^d}f(\bx)\frac{1}{m}\sum_{\nu=1}^{m}\varepsilon_\nu
\bigl|f(\xi^\nu)\bigr|^{p-1}\mathcal V_Q(\bx-\xi^\nu)d\bx\le\\
&\le \|f \|_p\left \|\frac{1}{m}\sum_{\nu=1}^{m}\varepsilon_\nu \bigl| f(\xi^\nu)
\bigr|^{p-1}\mathcal V_Q (\bx - \xi^\nu) \right\|_{p'}.
\end{align*}
Using (\ref{D2}), we see that the last term is
$$
\le C(d,p) \| f \|_p \left(\frac{1}{m}\sum_{\nu=1}^m  \bigl| f(\xi^\nu)
\bigr|^{p}\right)^{(p-1)/p} ,
$$
which implies the required inequality with $C_2(d,p) = C(d,p)^p$.
\end{proof}

The above discussion shows that instead of the universal discretization in a style of Theorem \ref{IT1} the universal simultaneous (of the $L_2$ and $L_p$ norms) sampling discretization problem can be considered. We give a precise formulation of this problem in a general setting. We begin with the known settings.

{\bf Sampling discretization.} Let $\Omega$ be a compact subset of $\R^d$ with the probability measure $\mu$. We say that a linear subspace $X_N$ (index $N$ here, usually, stands for the dimension of $X_N$) of $L_q(\Omega,\mu)$, $1\le q < \infty$, admits the sampling discretization  with parameters $m\in \N$ and $q$ and positive constants $C_1\le C_2$ if there exist a set $\{\xi^\nu\}_{\nu=1}^m$ 
 such that for any $f\in X_N$ we have
$$
C_1\|f\|_q^q \le \frac{1}{m} \sum_{\nu=1}^m |f(\xi^\nu)|^q \le C_2\|f\|_q^q.
$$
In the case $q=\infty$ we define $L_\infty$ as the space of continuous functions on $\Omega$  and ask for
$$
C_1\|f\|_\infty \le \max_{1\le \nu\le m} |f(\xi^\nu)| \le  \|f\|_\infty.
$$

{\bf Universal sampling discretization.} This problem is about finding (proving existence) of 
a set of points, which is good in the sense of the above sampling discretization 
for a collection of linear subspaces (see \cite{VT160}). We formulate it in an explicit form. Let $\cX_\bN:= \{X_{N_j}^j\}_{j=1}^k$, $\bN=(N_1,\dots,N_k)$, be a collection of linear subspaces $X_{N_j}^j$ of the $L_q(\Omega,\mu)$, $1\le q \le \infty$. We say that a set $\{\xi^\nu\}_{\nu=1}^m$ provides {\it universal sampling discretization} for the collection $\cX_\bN$ if, in the case $1\le q<\infty$, there are two positive constants $C_i=C_i(\Omega,q)$, $i=1,2$, such that for each $j\in [1,k]$ and any $f\in X_{N_j}^j$ we have
$$
C_1 \|f\|_q^q \le \frac{1}{m} \sum_{\nu=1}^m |f(\xi^\nu)|^q \le C_2 \|f\|_q^q.
$$
In the case $q=\infty$  for each $j\in [1,k]$ and any $f\in X_N^j$ we have
$$
C_1 \|f\|_\infty \le \max_{1\le\nu\le m} |f(\xi^\nu)| \le  \|f\|_\infty.
$$

{\bf Universal simultaneous sampling discretization.} Let $1\le p_1<p_2\le\infty$ be given.  Let $\cX_\bN:= \{X_{N_j}^j\}_{j=1}^k$, $\bN=(N_1,\dots,N_k)$, be a collection of linear subspaces $X_{N_j}^j$ of the $L_{p_2}(\Omega,\mu)$. We say that a set $\{\xi^\nu\}_{\nu=1}^m$ provides {\it universal simultaneous sampling discretization} for the collection $\cX_\bN$ if, in the case $1< p_2<\infty$, there are four positive constants $C_{i,r}(\Omega,p_r)$, $i=1,2$, $r=1,2$, such that for each $j\in [1,k]$ and any $f\in X_{N_j}^j$ we have
\be\label{D4}
C_{1,r}(\Omega,p_r)\|f\|_{p_r}^{p_r} \le \frac{1}{m} \sum_{\nu=1}^m |f(\xi^\nu)|^{p_r} \le C_{2,r}(\Omega,p_r)\|f\|_{p_r}^{p_r}.
\ee
In the case $p_2=\infty$  for each $j\in [1,k]$ and any $f\in X_N^j$ we have (\ref{D4}) for $r=1$ and for $r=2$ we have
$$
C_{1,2}(\Omega)\|f\|_\infty \le \max_{1\le\nu\le m} |f(\xi^\nu)| \le  \|f\|_\infty.
$$

Note that the case of exact discretization corresponds to the case $C_{1,r}=C_{2,r}$. 

We now discuss universal discretization for a specific collection of subspaces. In \cite{VT160} we studied the universal sampling discretization  for the collection of subspaces of trigonometric polynomials with frequencies from parallelepipeds (rectangles). We now formulate the corresponding result from \cite{VT160}. For $\bs=(s_1,\dots,s_d)\in\Z^d_+$
define
$$
R(2^\bs) := \{\bk =(k_1,\dots,k_d)\in \Z^d :   |k_i| < 2^{s_i}, \quad i=1,\dots,d\}.
$$
  Consider the collection $\cC(n,d):= \{\Tr(R(2^\bs)): \|\bs\|_1=n\}$. 

We proved in \cite{VT160} the following result.  
\begin{Theorem}[\cite{VT160}]\label{DT1}   For every $1\le q\le\infty$ there exists a positive constant $C(d,q)$, which depends only on $d$ and $q$, such that for any $n\in \N$ there is a set $\{\xi^\nu\}_{\nu=1}^m\subset \T^d$, with $m\le C(d,q)2^n$ that provides universal discretization of the $L_q$ norm  for the collection $\cC(n,d)$.
\end{Theorem}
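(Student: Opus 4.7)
The plan is to combine a single-subspace sampling discretization result with a union bound, exploiting that the collection $\cC(n,d)$ contains only polynomially many subspaces in $n$ whereas each subspace has dimension exponential in $n$. Counting: the number of $\bs\in\Z_+^d$ with $\|\bs\|_1=n$ is $\binom{n+d-1}{d-1}\le(n+1)^{d-1}$, while each $X_\bs:=\Tr(R(2^\bs))$ has dimension $N_\bs=\prod_{i=1}^d(2\cdot 2^{s_i}-1)$ satisfying $2^n\le N_\bs\le 2^{n+d}$. Hence the number of constraints in the universal problem is only polynomial in $n$, while each single-subspace problem sits at the scale $N\asymp 2^n$.

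For the single-subspace step, I would invoke a probabilistic sampling discretization theorem: for a fixed $N$-dimensional subspace $X$ of trigonometric polynomials on $\T^d$, taking $m=C(d,q)N$ i.i.d.\ uniformly random points $\xi^\nu\in\T^d$, the two-sided $L_q$-discretization inequality
\[
C_1\|f\|_q^q \le \frac{1}{m}\sum_{\nu=1}^m |f(\xi^\nu)|^q \le C_2\|f\|_q^q
\]
holds uniformly over $f\in X$ with failure probability that is super-polynomially small in $N$. For $q=2$ this follows from matrix concentration (operator Bernstein/Rudelson-type bounds applied to the rank-one sum $\frac{1}{m}\sum_\nu e(\xi^\nu)e(\xi^\nu)^*$, where $e(\bx)$ collects the basis evaluations); for $1\le q<\infty$ one combines Bernstein-type tail estimates on empirical sums with Dudley-type chaining over the unit sphere of $X$; for $q=\infty$ one passes through Nikolskii's inequality $\|f\|_\infty\le CN^{1/q_0}\|f\|_{q_0}$ with $q_0\asymp\log N$, reducing to the finite-$q$ case already handled.

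With $m=C(d,q)2^n$, the failure probability for each individual $X_\bs$ is bounded by some $p(n)$ decaying faster than any polynomial in $n$. A union bound over the at most $(n+1)^{d-1}$ subspaces in $\cC(n,d)$ then bounds the total failure probability by $(n+1)^{d-1}p(n)$, which is strictly less than $1$ for all $n\ge n_0(d,q)$. Consequently a deterministic realisation of the random set exists providing universal $L_q$-discretization for every $X_\bs\in\cC(n,d)$ with $m\le C(d,q)2^n$ points; the finitely many small cases $n<n_0(d,q)$ are absorbed into the constant $C(d,q)$.

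The main obstacle is the sharp single-subspace result: obtaining $m\asymp N$ without a logarithmic loss, together with a failure probability that defeats the $(n+1)^{d-1}$ factor of the union bound. The case $q=\infty$ is especially delicate because random samples do not interact naturally with the sup-norm and must be handled through the Nikolskii reduction just outlined; the extremes $q=1$ and $q=\infty$ rely on duality and Bernstein-type inequalities for trigonometric polynomials, respectively. Once the sharp (log-free) single-subspace bounds are in hand, the union-bound reduction and the polynomial cardinality of $\cC(n,d)$ make the universal statement essentially immediate.
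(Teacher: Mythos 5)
Your reduction of the universal problem to a single-subspace statement plus a union bound over the $\binom{n+d-1}{d-1}$ subspaces is sound in principle, but the single-subspace input you invoke does not exist in the form you need, and this is precisely where the difficulty of the theorem lies. For $m=C(d,q)N$ i.i.d.\ uniform points (no logarithmic oversampling), the two-sided discretization inequality does not hold with probability tending to $1$; in fact it fails with high probability. Already for $d=1$, $q=2$ and $X=\Tr([-N,N])$: among $m\asymp N$ i.i.d.\ points the maximal gap is of order $(\log N)/N\gg 1/N$ with high probability, and a Fej\'er kernel $\cK_N(\cdot-x_0)$ centered in such a gap has $\|\cK_N\|_2^2\asymp N$ while its values at all sample points are too small for the lower discretization bound to survive. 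Matrix Chernoff/Bernstein bounds reflect this: they give failure probability of order $N\exp(-cm/N)$, which with $m\asymp N$ does not even tend to zero, let alone super-polynomially fast. The known log-free results with $m\asymp N$ in $L_2$ (the paper's Theorem \ref{DT4}) come from the Marcus--Spielman--Srivastava and Batson--Spielman--Srivastava machinery; these are existence/selection arguments via interlacing polynomials, not i.i.d.\ sampling, and they carry no failure probability that could be fed into a union bound over the collection. For $q\neq 2$ (and especially $q=\infty$) the situation is only worse: log-free single-subspace discretization for a \emph{general} $N$-dimensional subspace is not available, so the union-bound strategy cannot be repaired by citing a black-box theorem.

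The paper's route (from \cite{VT160}) is entirely different and exploits the specific structure of $\cC(n,d)$ rather than treating its members as generic subspaces: one uses an explicit deterministic point set, a $(t,r,d)$-net in base $2$ (Definition \ref{DD1}, constructed in \cite{NX}), whose defining property --- every dyadic box of volume $2^{t-r}$ contains exactly $2^t$ points --- is simultaneously adapted to \emph{all} dyadic rectangles $R(2^{\bs})$ with $\|\bs\|_1=n$ at once. Universality is thus built into the geometry of the point set, and no union bound over the collection is needed. If you want to salvage a probabilistic argument, you would at minimum have to accept $m\asymp 2^n\cdot n^{c}$ (losing the sharp bound claimed in the theorem), or find a way to combine the MSS-type selection with universality, which is an open direction rather than a routine step.
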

Theorem \ref{DT1} basically solves the universal discretization problem for the collection $\cC(n,d)$. It provides the upper bound  $m\le C(d,q)2^n$ with $2^n$ being of the order of the dimension of each $\Tr(R(2^\bs))$ from the collection $\cC(n,d)$.
 Obviously, the lower bound for the cardinality of a set, providing the sampling discretization for $\Tr(R(2^\bs))$ with $\|\bs\|_1=n$, is $\ge C(d)2^n$. 

The proof of Theorem \ref{DT1} from \cite{VT160} is based on special nets, which we define below.
\begin{Definition}\label{DD1} A $(t,r,d)$-net (in base $2$) is a set $T$ of $2^r$ points in 
$[0,1)^d$ such that each dyadic box $[(a_1-1)2^{-s_1},a_12^{-s_1})\times\cdots\times[(a_d-1)2^{-s_d},a_d2^{-s_d})$, $1\le a_j\le 2^{s_j}$, $j=1,\dots,d$, of volume $2^{t-r}$ contains exactly $2^t$ points of $T$.
\end{Definition}
A construction (which is a very nontrivial construction) of such nets for all $d$ and $t\ge Cd$, where $C$ is a positive absolute constant, $r\ge t$  is given in \cite{NX}. 

It was proved in \cite{VT161} that in the case $d=2$ a very simple point sets, namely the Fibonacci point sets, provide Theorem \ref{DT1}. Here is the corresponding result.

\begin{Theorem}[\cite{VT161}]\label{DT2} The Fibonacci point set $\cF_n$ provides the universal discretization in $L_q$, $1\le q\le\infty$, for the collection $\cC(r,2)$ with $r$ satisfying 
the condition $9\cdot 2^r \le \gamma b_n$.
\end{Theorem}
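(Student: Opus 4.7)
\medskip

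The plan is to apply Proposition \ref{DP1} to each subspace in the collection $\cC(r,2)$, using the tensor-product de la Vall\'ee Poussin kernel from Section \ref{A} as the reproducing kernel. Fix $\bs=(s_1,s_2)$ with $\|\bs\|_1=r$ and set $\bj:=(2^{s_1},2^{s_2})$, so that $R(2^\bs)=R(\bj)$ and $j_1j_2=2^r$. I will take $\cV_Q:=\cV_\bj$ in the notation of Proposition \ref{DP1}.

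First I verify the identity chain (\ref{D1}) on $\Tr(R(\bj))$. The continuous half, $(2\pi)^{-2}\int_{\T^2} f(\by)\cV_\bj(\bx-\by)d\by=f(\bx)$, is the standard reproducing property of the univariate kernels $\cV_{j_i}$ on frequencies $|k|<j_i$ and follows from (\ref{A2}). For the discrete half, observe that as a function of $\by$ the product $f(\by)\cV_\bj(\bx-\by)$ has frequencies confined to the rectangle $|k_i|\le 3j_i-2$, since $\cV_\bj$ has order $2j_i-1$ in each variable and $f$ has frequencies $|k_i|\le j_i-1$. Hence this product lies in $\Tr(\Gamma(N))$ with $N\le 9j_1j_2=9\cdot 2^r$, and the hypothesis $9\cdot 2^r\le\gamma b_n$ allows one to invoke Lemma \ref{AL1} to conclude that $\Phi_n$ integrates it exactly. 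This is precisely the second equality in (\ref{D1}).

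Next, the mixed operator bound (\ref{D2}), required for both $q=p$ and $q=p'$ with absolute constant $9$, is exactly part (II) of Theorem \ref{IT1} applied to $\bj$; its weaker hypothesis $4j_1j_2=4\cdot 2^r\le\gamma b_n$ is automatic from ours. Proposition \ref{DP1} then yields sampling discretization of the $L_q$ norm on $\Tr(R(\bj))$ with constants depending only on $q$, uniformly in the choice of $\bs$ with $\|\bs\|_1=r$. This is precisely universal discretization for $\cC(r,2)$ in the case $1\le q<\infty$. The case $q=\infty$ needs a short separate argument since the duality step of Proposition \ref{DP1} does not cover it: the upper bound $\max_\nu|f(\by^\nu)|\le\|f\|_\infty$ is trivial, and inserting the already-established discrete reproducing formula into the $L_\infty\to L_\infty$ bound from Lemma \ref{AL2} gives $\|f\|_\infty\le 9\max_\nu|f(\by^\nu)|$.

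The main point to get right is conceptual rather than computational: one must observe that, no matter how ``stretched'' the rectangle $R(2^\bs)$ with $\|\bs\|_1=r$ happens to be, the resulting product polynomial sits in a hyperbolic cross of size $9\cdot 2^r$ that depends only on $r$, not on the shape of $\bs$. This uniformity is what allows the single point set $\cF_n$ to serve simultaneously for the whole collection $\cC(r,2)$, and it also explains why the factor $9$ in the hypothesis replaces the $4$ of Theorem \ref{IT1}: it accounts for the triple frequency spread of the kernel $\cV_\bj$ used to witness the discretization via Proposition \ref{DP1}.
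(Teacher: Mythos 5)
Your argument is correct, and it is essentially the intended one: the paper itself states Theorem \ref{DT2} as an imported result from \cite{VT161} and gives no proof, but the machinery it develops (Lemma \ref{AL1} for exactness on the hyperbolic cross, part (II) of Theorem \ref{IT1} for the bound (\ref{D2}), and Proposition \ref{DP1} to convert these into two-sided discretization) is exactly what you assemble, and your accounting of the frequency spread $(3j_1-2)(3j_2-2)\le 9\cdot 2^r$ recovers precisely the constant $9$ appearing in the theorem's hypothesis. Your separate treatment of $q=\infty$ (where the duality step of Proposition \ref{DP1} is unavailable) via the discrete reproducing formula and Lemma \ref{AL2} is also the right fix; I see no gaps.
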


In this paper it is more convenient for us to consider instead of the collection $\cC(n,d)$ of dyadic rectangles a collection of all rectangles $\cC'(N,d)$.

The following variant of  Theorem \ref{DT2} follows from its proof  in \cite{VT161}.

\begin{Theorem}\label{DT3} The Fibonacci point set $\cF_n$ provides the universal discretization in $L_q$, $1\le q\le\infty$, for the collection $\cC'(N,2)$ with $N$ satisfying 
the condition $9 N \le \gamma b_n$.
\end{Theorem}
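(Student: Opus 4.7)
The plan is to reduce Theorem \ref{DT3} to Proposition \ref{DP1}, applied to each subspace $\Tr(R(\bj))$ in the collection $\cC'(N,2)$ with kernel $\cV_Q := \cV_\bj$ and sampling set equal to the Fibonacci points $\{\by^\nu\}_{\nu=1}^{b_n}$. Since $\cV_\bj$ is an even trigonometric polynomial, it is enough to verify hypotheses \eqref{D1} and \eqref{D2} uniformly in $\bj$ subject to $j_1 j_2 \le N$ and $9N \le \gamma b_n$, and then handle the endpoint $p=\infty$ separately.

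For \eqref{D1}, the continuous identity $f = f * \cV_\bj$ on $\Tr(R(\bj))$ is immediate since $\widehat{\cV_\bj}(\bk) = 1$ on $R(\bj)$. The discrete identity reduces to showing that the Fibonacci cubature integrates the product $\by \mapsto f(\by)\cV_\bj(\bx - \by)$ exactly for each fixed $\bx$. This is where the factor $9$ in the hypothesis enters: $f$ has Fourier spectrum in $R(\bj)$ and $\cV_\bj$ has spectrum in $R(2\bj)$ (since $\cV_{j_i}$ has degree $2j_i - 1$), hence the product has spectrum in $R(3\bj) \subset \Gamma(9 j_1 j_2) \subset \Gamma(\gamma b_n)$, and Lemma \ref{AL1} gives the required exactness.

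Hypothesis \eqref{D2} --- the operator bound $\|b_n^{-1}\sum_\nu a_\nu \cV_\bj(\bx - \by^\nu)\|_q \le 9 \|\ba\|_{q,b_n}$ with constant independent of $\bj$ --- is precisely Lemma \ref{AL2} (part (II) of Theorem \ref{IT1}), whose hypothesis $4 j_1 j_2 \le \gamma b_n$ is implied by $9 j_1 j_2 \le \gamma b_n$. Proposition \ref{DP1} then delivers the desired discretization inequalities in $L_p$ for all $1 \le p < \infty$. The case $p = \infty$ is immediate by hand: $\max_\nu |f(\by^\nu)| \le \|f\|_\infty$ is trivial, while applying the $q = \infty$ version of the operator bound to the discrete representation of the previous step yields $\|f\|_\infty \le 9 \max_\nu |f(\by^\nu)|$. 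The only substantive point in the whole argument is the frequency bookkeeping behind the factor $9$; once that is in place, the theorem follows by direct application of Theorem \ref{IT1} and Proposition \ref{DP1}.
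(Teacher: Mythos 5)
Your proof is correct and assembles exactly the machinery the paper provides for this purpose: the discrete reproducing identity via Lemma \ref{AL1} (with the spectrum of $f(\cdot)\,\cV_\bj(\bx-\cdot)$ lying in $\Gamma(9j_1j_2)\subset\Gamma(\gamma b_n)$, which is precisely where the factor $9$ in the hypothesis comes from), the uniform operator bound of Lemma \ref{AL2}, the duality argument of Proposition \ref{DP1} for $1\le p<\infty$, and the separate treatment of $p=\infty$. The paper itself does not write this argument out --- it states that Theorem \ref{DT3} follows from the proof of Theorem \ref{DT2} in \cite{VT161} --- but your derivation is a faithful, self-contained version of that intended argument built entirely from the paper's own lemmas.
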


As we pointed out above our requirement of discretization of the convolution is equivalent to 
the exact sampling discretization of the $L_2$ norm. It is known that the property of the exact sampling discretization of the $L_2$ norm is much stronger than just sampling discretization with some constants $C_1$ and $C_2$. We now explain this in more detail. 

First, we note that 
for all $\bj$ such that $\prod_{i=1}^d j_i \le N$ we have the embedding $R(\bj) \subset \Gamma(N,d)$. Therefore, for the universal exact discretization of the $L_2$ norm for the collection 
$\cC'(N,d)$ it would be sufficient to provide the exact discretization of the $L_2$ norm for one subspace $\Tr(N,d)$. It is know (see \cite{DPTT}, section 3.5) that for that we need at least $m$ of order $N^2$ points. It is also known (see \cite{VT158}, section 4) that there exists a number theoretical construction of $m\le C(d)N^2(\log N)^{2(d-1)}$ points, which provide 
the exact discretization of the $L_2$ norm for the subspace $\Tr(N,d)$. We point out that 
our Theorem \ref{IT1} provides in the case $d=2$ optimal in the sense of order result for 
the universal sampling discretization for the collection $\cC'(N,d)$ simultaneously exact in case of $L_2$ and non-exact in $L_p$. Theorem \ref{CT1} provides in case of general $d$ 
suboptimal results (up to the logarithmic factor). 

Second, in the case of non-exact sampling discretization of the $L_2$ norm the following 
optimal in the sense of order result is known. 

\begin{Theorem}[\cite{VT158}]\label{DT4} There are three positive absolute constants $C_1$, $C_2$, and $C_3$ with the following properties: For any $d\in \N$ and any $Q\subset \Z^d$   there exists a set of  $m \le C_1|Q| $ points $\xi^\nu\in \T^d$, $\nu=1,\dots,m$, such that for any $f\in \Tr(Q)$ 
we have
$$
C_2\|f\|_2^2 \le \frac{1}{m}\sum_{\nu=1}^m |f(\xi^\nu)|^2 \le C_3\|f\|_2^2.
$$
\end{Theorem}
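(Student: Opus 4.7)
The plan is to reformulate the problem as a matrix inequality and then invoke the Marcus--Spielman--Srivastava solution of the Kadison--Singer problem. Set $N := |Q|$ and let $\phi(\xi) := (e^{i(\bk,\xi)})_{\bk\in Q}\in \bbC^N$. For $f=\sum_{\bk\in Q} c_\bk e^{i(\bk,\bx)}\in \Tr(Q)$ we have $f(\xi) = \langle c,\phi(\xi)\rangle$ and $\|f\|_2 = \|c\|_{\ell^2}$, so the target two-sided sampling inequality is equivalent to the positive semidefinite operator bound
$$
C_2 I_N \preceq \frac{1}{m}\sum_{\nu=1}^{m} \phi(\xi^\nu)\phi(\xi^\nu)^{\ast} \preceq C_3 I_N.
$$
The crucial structural fact is that $\|\phi(\xi)\|_{\ell^2}^2 = N$ for every $\xi\in\T^d$: the diagonal of the reproducing kernel of $\Tr(Q)$ is constant.

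First I would perform random sampling. Draw $M \asymp N\log N$ points $\xi^\nu$ independently and uniformly on $\T^d$; the i.i.d.\ rank-one matrices $\phi(\xi^\nu)\phi(\xi^\nu)^{\ast}$ have mean $I_N$ and uniform operator norm $N$. The matrix Chernoff inequality (Ahlswede--Winter / Tropp) then yields, with high probability, that $\frac{1}{M}\sum_\nu \phi(\xi^\nu)\phi(\xi^\nu)^{\ast}$ is spectrally within constant factors of $I_N$. This already produces a sampling set of cardinality $O(N\log N)$, but with an extraneous logarithmic factor that must be removed.

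The final step sparsifies the above system to $O(N)$ points. Rescaling $v_\nu := \sqrt{N/M}\,\phi(\xi^\nu)$, one has $\sum_\nu v_\nu v_\nu^{\ast} \asymp I_N$ and $\|v_\nu\|^2 = N^2/M \ll 1$ by the choice of $M$. Now apply Weaver's KS$_2$ inequality (the Marcus--Spielman--Srivastava theorem): it partitions the indices into two blocks, each of which is a frame with spectral bounds of the form $\tfrac{1}{2}I_N \pm O(\sqrt{N^2/M})$. Iterating $O(\log\log N)$ times --- with $N^2/M$ chosen small enough at the outset so that the cumulative spectral loss is absorbed in universal constants --- reduces the cardinality to $m \le C_1 N$ while preserving the two-sided operator inequality, which translates back into the desired $L_2$ discretization bounds.

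The main obstacle is this last sparsification step. The matrix Chernoff argument is essentially routine once one notices that $\|\phi(\xi)\|^2 \equiv N$, but eliminating the $\log N$ factor to reach the optimal order $m\asymp N$ genuinely requires the Kadison--Singer theorem of Marcus--Spielman--Srivastava; no elementary route is known. The constancy of the reproducing-kernel diagonal on $\T^d$, special to the trigonometric system, is exactly the hypothesis that makes the MSS theorem directly applicable.
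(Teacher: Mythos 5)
The paper does not actually prove this theorem: it is imported from \cite{VT158}, where it is derived from a subset-selection lemma of Nitzan--Olevskii--Ulanovskii \cite{NOU}, which in turn rests on the Marcus--Spielman--Srivastava resolution of Kadison--Singer \cite{MSS}. Your overall strategy --- recast the discretization as the two-sided operator bound $C_2I_N\preceq\frac1m\sum_\nu\phi(\xi^\nu)\phi(\xi^\nu)^{\ast}\preceq C_3I_N$ with $N=|Q|$, and exploit the constant diagonal $\|\phi(\xi)\|^2\equiv N$ to run an MSS-based sparsification --- is exactly the right lineage. The actual proof, however, has no probabilistic stage: one starts from the full tensor-product grid of a rectangle containing $Q$, on which $\frac1M\sum\phi\phi^{\ast}=I_N$ holds \emph{exactly} by orthogonality of the exponentials, and applies the \cite{NOU} selection lemma once (the iteration of Weaver's $KS_2$ is internal to that lemma). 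Your random-sampling/matrix-Chernoff stage is therefore an unnecessary detour, and it creates the extra problem that $KS_2$ must then be applied to a merely approximate Parseval frame, which requires whitening at each step.

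There are also concrete errors in your sparsification step. With $v_\nu=\sqrt{N/M}\,\phi(\xi^\nu)$ you get $\sum_\nu v_\nu v_\nu^{\ast}\asymp N\,I_N$, not $I_N$; the correct normalization is $v_\nu=\phi(\xi^\nu)/\sqrt{M}$, which gives $\|v_\nu\|^2=N/M$, and your claim that $N^2/M\ll1$ is false for $M\asymp N\log N$ (it equals $N/\log N$). More importantly, the mechanism you invoke to control the iteration --- choosing the initial density ``small enough at the outset'' --- cannot work as stated: the number of halvings needed is $\log_2(M/(C_1N))$, and shrinking the initial density only increases that count. What actually saves the argument, and is the substance of the \cite{NOU} lemma, is that the per-step spectral losses $O(\sqrt{\delta_k})$ with $\delta_k\asymp 2^kN/M$ form a geometric series dominated by its \emph{last} term $O(\sqrt{N/m})=O(1/\sqrt{C_1})$, so the cumulative distortion depends only on the final density and is made small by taking the absolute constant $C_1$ large. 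With that correction (and the whitening remark above), your outline coincides with the known proof; as written, the final step does not go through.
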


Theorem \ref{DT4} was derived from results of the paper by  S. Nitzan, A. Olevskii, and A. Ulanovskii \cite{NOU}, which in turn is based on the paper of A. Marcus, D.A. Spielman, and N. Srivastava \cite{MSS}. The breakthrough results in sampling discretization of the $L_2$ norm (see \cite{VT158}, \cite{LT}) and in sampling recovery in the $L_2$ norm (see \cite{KU}, \cite{KU2}, \cite{NSU}) are based on results by A.~Marcus, D.A.~Spielman, and
N.~Srivastava from \cite{MSS} (see Corollary 1.5 with $r=2$ there) obtained for solving the Kadison-Singer problem. Also, results from \cite{BSS} play a fundamental role in sampling discretization of the $L_2$ norm. The approach, based 
on \cite{MSS} allows us to obtain optimal (in the sense of order) results for discretization of the $L_2$ norm (see \cite{VT158} and \cite{LT}). For the first time it was done in \cite{VT158} with the help of a lemma from \cite{NOU}. The corresponding lemma from \cite{NOU}  was further generalized in \cite{LT} for proving optimal in the sense of order sampling discretization results. A version of the corresponding lemma from \cite{LT} was used in \cite{NSU} for the sampling recovery.  The first application of the results from \cite{BSS} in the sampling discretization of the $L_2$ norm was done in \cite{VT159}.
The reader can find a detailed discussion of these results in \cite{KKLT}, Section 2.6.

\subsection{Representation and discretization} 
We now give a general formulation of variants of the setting that was studied in this paper.

{\bf Sampling shift representation  (SSR).}  We say that a linear subspace $X_N$   of $L_\infty(\Omega)$,  admits the SSR  with parameter $m\in \N$ if there exist a function $\ff \in L_\infty(\Omega)$ and a set of points $\{\xi^\nu\}_{\nu=1}^m$
 such that for any $f\in X_N$ we have
$$
f(x) = \frac{1}{m}\sum_{\nu=1}^m f(\xi^\nu)\ff(x-\xi^\nu).
$$
We also say that the above set of points $\{\xi^\nu\}_{\nu=1}^m$ provides SSR
for $X_N$. 

{\bf Universal sampling shift representation   (USSR).} We say that a set of points $\{\xi^\nu\}_{\nu=1}^m$ provides USSR for the collection $\cX_\bN:= \{X_{N_j}^j\}_{j=1}^k$, $\bN=(N_1,\dots,N_k)$, if  for each $j\in [1,k]$  there exists a function $\ff^j\in L_\infty(\Omega)$ such that for any $f\in X_{N_j}^j$ we have
$$
f(x) = \frac{1}{m}\sum_{\nu=1}^m f(\xi^\nu)\ff^j(x-\xi^\nu).
$$


{\bf Shift $p$-representation  (SpR).}  We say that a linear subspace $X_N$   of $L_p(\Omega)$,  admits the SpR  with parameter $m\in \N$ if there exist a function $\ff \in L_p(\Omega)$ and a set of points $\{\xi^\nu\}_{\nu=1}^m$
 such that for any $f\in X_N$ there is a vector $\ba(f)=(a_1(f),\dots,a_m(f))\in \R^m$, which gives a representation  
$$
f(x) = \frac{1}{m}\sum_{\nu=1}^m a_\nu(f)\ff(x-\xi^\nu)
$$
and satisfies the bound
$$
\|f\|_p \le C(\Omega,p)\|\ba\|_{p,m}.
$$
We also say that the above set of points $\{\xi^\nu\}_{\nu=1}^m$ provides SpR
for $X_N$. 
 
{\bf Universal shift $p$-representation  (USpR).} We say that a set of points $\{\xi^\nu\}_{\nu=1}^m$ provides USpR for the collection $\cX_{\bN}$ if this set of points provides SpR for each $X_{N_j}^j$ from the collection $\cX_{\bN}$. 

{\bf Universal sampling shift $p$-representation  (USSpR).} We say that a set of points $\{\xi^\nu\}_{\nu=1}^m$ provides USSpR for the collection $\cX_{\bN}$ if this set of points provides SpR for each $X_{N_j}^j$ from the collection $\cX_{\bN}$ with $\ba(f)= (f(\xi^1),\dots,f(\xi^m))$. 

For instance, a version of Theorem \ref{IT1} gives the following result.
\begin{Theorem}\label{DT5} The Fibonacci point set $\cF_n$ provides the USSpR, $1\le p\le\infty$, for the collection $\cC'(N,2)$ with $N$ satisfying 
the condition $9 N \le \gamma b_n$.
\end{Theorem}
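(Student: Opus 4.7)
\medskip
\noindent \textbf{Proof proposal for Theorem \ref{DT5}.} The plan is to obtain Theorem \ref{DT5} as a direct packaging of the two parts of Theorem \ref{IT1}, with the choice $\ff^\bj := \cV_\bj$ for each member $\Tr(R(\bj))$ of the collection $\cC'(N,2)$. First I would note that the condition on $N$ stated in Theorem \ref{DT5} is slightly stronger than the hypothesis $4j_1j_2\le\gamma b_n$ required in Theorem \ref{IT1}: for any $\bj$ with $j_1j_2\le N$ we have $4j_1j_2\le 4N\le 9N\le\gamma b_n$, so both parts of Theorem \ref{IT1} are available uniformly over the collection.

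Next I would verify the representation property. Since $\cV_\bj$ is a reproducing kernel on $\Tr(R(\bj))$, namely $f = f*\cV_\bj$ for every $f\in\Tr(R(\bj))$ (which follows because the Fourier coefficients of $\cV_\bj$ equal $1$ on $R(\bj)$), I would apply Part (I) of Theorem \ref{IT1} with $g=\cV_\bj$. This immediately gives
$$
f(\bx) = (f*\cV_\bj)(\bx) = \frac{1}{b_n}\sum_{\nu=1}^{b_n} f(\by^\nu)\,\cV_\bj(\bx-\by^\nu),
$$
which is exactly the sampling shift representation required by the definition of USSpR, with $\ff^\bj=\cV_\bj$ and coefficient vector $\ba(f)=(f(\by^1),\dots,f(\by^{b_n}))$.

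Finally I would apply Part (II) of Theorem \ref{IT1} with $a_\nu=f(\by^\nu)$ to obtain the norm bound
$$
\|f\|_p = \left\|\frac{1}{b_n}\sum_{\nu=1}^{b_n} f(\by^\nu)\cV_\bj(\bx-\by^\nu)\right\|_p \le 9\,\|(f(\by^1),\dots,f(\by^{b_n}))\|_{p,b_n},
$$
valid for every $1\le p\le\infty$, which is the second ingredient of SpR. Since the set $\cF_n$ and the bound $9$ are independent of $\bj$, this gives USSpR for the whole collection $\cC'(N,2)$.

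There is essentially no obstacle here beyond bookkeeping: the hard analytic work (the arithmetic property of the Fibonacci set captured by Lemma \ref{AL1}, and the Riesz--Thorin argument behind Lemma \ref{AL2}) has already been carried out in Theorem \ref{IT1}. The only point worth double-checking while writing the proof is that the constant $9$ in the $L_p$ estimate holds uniformly in $p\in[1,\infty]$ (which it does, since $9$ is what both endpoints $p=1$ and $p=\infty$ give in Lemma \ref{AL2} and interpolation preserves it), so the same point set $\cF_n$ and the same kernels $\cV_\bj$ work simultaneously for all $p$.
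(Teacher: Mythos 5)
Your overall strategy (package parts (I) and (II) of Theorem \ref{IT1} with $\ff^{\bj}=\cV_{\bj}$ and $\ba(f)=(f(\by^1),\dots,f(\by^{b_n}))$) is exactly the derivation the paper intends, and the $L_p$ bound step via part (II) is fine. But there is a genuine gap in the representation step: you ``apply Part (I) of Theorem \ref{IT1} with $g=\cV_{\bj}$,'' and Part (I) requires $g\in\Tr(R(\bj))$, whereas $\cV_{\bj}=\cV_{j_1}(x_1)\cV_{j_2}(x_2)$ is of order $2j_i-1$ in the $i$-th variable, so $\cV_{\bj}\in\Tr(R(2\bj))\setminus\Tr(R(\bj))$. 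The identity you want is still true, but it does not follow from Part (I) as stated; you must return to Lemma \ref{AL1}. For $f\in\Tr(R(\bj))$ the function $\by\mapsto f(\by)\cV_{\bj}(\bx-\by)$ is a trigonometric polynomial with frequencies $\bk-\bl$, $|k_i|\le j_i-1$, $|l_i|\le 2j_i-1$, hence supported in $[-(3j_1-2),3j_1-2]\times[-(3j_2-2),3j_2-2]\subset \Gamma\bigl((3j_1-2)(3j_2-2)\bigr)\subset\Gamma(9j_1j_2)\subset\Gamma(9N)$, and the hypothesis $9N\le\gamma b_n$ is precisely what lets Lemma \ref{AL1} convert $\Phi_n\bigl(f(\cdot)\cV_{\bj}(\bx-\cdot)\bigr)$ into $(f\ast\cV_{\bj})(\bx)=f(\bx)$.

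This also corrects a mischaracterization in your write-up: the condition $9N\le\gamma b_n$ is not merely ``slightly stronger'' than $4j_1j_2\le\gamma b_n$ for safety --- the factor $9$ is forced by the enlarged spectrum of the de la Vall\'ee Poussin kernel in the reproduction identity (while the factor $4$ suffices for the Fej\'er-kernel majorant used in part (II)). Once you replace the illegitimate invocation of Part (I) by this direct appeal to Lemma \ref{AL1}, the rest of your argument goes through and matches the paper's intended derivation.
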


{\bf Open problem 1.} Let $d\in \N$, $d\ge 3$. Is there a constant $C(d)$ such that there exists a point set $\{\xi^\nu\}_{\nu=1}^m$ with $m\le C(d)N$, which provides the USSR for the collection $\cC'(N,d)$?

 {\bf Open problem 2.} Let $d\in \N$, $d\ge 3$, and $1\le p\le \infty$ be given. Is there a constant $C(d,p)$ such that there exists a point set $\{\xi^\nu\}_{\nu=1}^m$ with $m\le C(d,p)N$, which provides the USSpR for the collection $\cC'(N,d)$?
 
 {\bf Open problem 3.} Let $d\in \N$, $d\ge 3$. Is there a constant $C(d)$ such that there exists a point set $\{\xi^\nu\}_{\nu=1}^m$ with $m\le C(d)N$, which provides the USSpR for all $1\le p\le \infty$ for the collection $\cC'(N,d)$?

{\bf Acknowledgements.}  
The work was supported by the Russian Federation Government Grant N{\textsuperscript{\underline{o}}}14.W03.31.0031.

\end{document}